
\documentclass[12pt]{article}
\usepackage{a4wide}
\usepackage{latexsym, amsmath, amsfonts, amsthm, amssymb}
\usepackage{times}


\newtheorem{theo}{Theorem}
\newtheorem{prop}[theo]{Proposition}
\newtheorem{lemma}[theo]{Lemma}

\newtheorem{rem}[theo]{Remark}

\newtheorem{defi}{Definition}


\newcommand{\R}{\ensuremath{\mathbb{R}}} 



\def\eps{\varepsilon}

\newcommand{\upchi}{\raise1pt\hbox{$\chi$}}


\newcommand{\be}{\begin{equation}}
\newcommand{\ee}{\end{equation}}
\def\benu{\begin{enumerate}}
\def\eenu{\end{enumerate}}

\def \RR {\mathbb R}
\def \R {\mathbb R}
\def \eps {\varepsilon}
\def \vphi {\varphi}

\def \cH {\mathcal H}

\def \cJ {\mathcal J}
\def \cR {\mathcal R}
\def \cI {\mathcal I}

\def\qed{\hfill $\vcenter{\hrule height .3mm
\hbox {\vrule width .3mm height 2.1mm \kern 2mm \vrule width .3mm
height 2.1mm} \hrule height .3mm}$ \bigskip}

\begin{document}

\title{Moment Measures}
\author{D. Cordero-Erausquin\textsuperscript{1} and B. Klartag\textsuperscript{2}}

\date{}

\footnotetext[1]{Institut de Math\'ematiques de Jussieu and Institut Universitaire de France,
Universit\'e Pierre et Marie Curie (Paris 6), 4 place Jussieu, 75252
Paris, France. Email: cordero@math.jussieu.fr}

\footnotetext[2]{School of Mathematical Sciences, Tel Aviv
University, Tel Aviv 69978, Israel. Email: klartagb@tau.ac.il}

\vspace{-2in}\maketitle

\begin{abstract}
With any convex function $\psi$ on a finite-dimensional linear space $X$ such that $\psi$ goes to $+\infty$ at infinity,
we associate a Borel measure $\mu$ on $X^*$. The measure $\mu$ is obtained by pushing forward the measure $e^{-\psi(x)} \, dx$ under the differential of $\psi$.
We propose a class of convex functions -- the essentially-continuous, convex functions -- for which the above correspondence
is in fact a bijection onto the class of   finite Borel measures whose barycenter is at the origin and whose support spans $X^*$.
The construction is related to toric K\"ahler-Einstein metrics in complex geometry, to Pr\'ekopa's inequality, and to the Minkowski problem in convex geometry.
\end{abstract}

\section{Introduction}

The aim of the present work is to extend the results  on moment
measures obtained by Berman and Berndtsson~\cite{BB} in their work
on K\"ahler-Einstein metrics in toric varieties, which builds upon earlier  works by Wang and Zhu \cite{WZ},
by Donaldson \cite{donaldson} and by E. Legendre \cite{e_legednre}. Simultaneously,
our analysis of moment measures should be viewed as a functional version
of the classical Minkowski problem (see, e.g., Schneider \cite[Section 7.1]{schneider})
or the logarithmic Minkowski problem of B\"or\"oczky, Lutwak, Yang and Zhang \cite{BLYZ}.
Yet a third point of view, is that we discuss a certain kind of Monge-Amp\`ere equation,
and establish existence and uniqueness of generalized solutions.

\medskip
Suppose that $\psi: \RR^n \rightarrow \RR \cup \{ + \infty \}$ is a convex function, i.e., for any $0 < \lambda < 1$
and $x, y \in \RR^n$,
$$ \psi \left( \lambda x + (1- \lambda) y \right) \leq \lambda \psi(x) + (1 - \lambda) \psi(y) $$
whenever $\psi(x) < +\infty$ and $\psi(y) < +\infty$. In this note, we treat $+\infty$ as a legitimate value
of convex functions, and we use relations such as $\exp(-\infty) = 0$ whenever they make sense.
The function $\psi$
is locally-Lipschitz and hence differentiable almost everywhere in  the interior of the set
$$ \{ \psi < + \infty \} = \{ x \in \RR^n \, ; \, \psi(x) < +\infty \}. $$
In K\"ahler geometry, the map
$$ x \longrightarrow\nabla \psi(x), $$
defined almost-everywhere in $\{ \psi < +\infty \}$, is closely related to the {\it moment map} of a toric K\"ahler manifold, see, e.g.
Abreu~\cite{abreu} or Gromov~\cite{gromov}. When the
function
$\psi$ is finite and smooth, the set
\begin{equation}  \nabla \psi(\RR^n) = \left \{ \nabla \psi(x) ; x \in \RR^n \right \} \label{eq_1708} \end{equation}
is necessarily convex. In certain cases of a interest the convex set (\ref{eq_1708}) is in fact a polytope, which is referred to as the {\it moment polytope}; a central role is played by a family of polytopes known as \emph{Delzant polytopes}  which carry a particular geometric structure.

\medskip
In this article, we  consider  convex functions $\psi: \RR^n \rightarrow \RR \cup \{ + \infty \}$  that satisfy the integrability condition $0 < \int \exp(-\psi) < \infty$. This
condition, for a convex function $\psi$, is equivalent to the following two requirements:
\begin{enumerate}
\item[(i)] The convex set $\{ \psi < +\infty \}$ is not contained in a hyperplane; and
\item[(ii)] $\displaystyle \lim_{x \rightarrow \infty} \psi(x) = +\infty. $
\end{enumerate}
We associate with such $\psi$ the finite (log-concave) measure $\mu_{\psi}$  on $\RR^n$ whose density is  $\exp(-\psi)$.

\begin{defi}
Given a convex function $\psi: \RR^n \rightarrow \RR \cup \{ + \infty \}$
with $0 < \int \exp(-\psi) < \infty$, we define its moment measure $\mu$ to
be the Borel measure on $\R^n$ which is the push-forward of $\mu_\psi$ under $\nabla\psi$.
This means that
\begin{equation}\label{eq:defimage}
\int_{\RR^n} b(y) \, d\mu(y) = \int_{\RR^n} b(\nabla \psi(x)) \, e^{-\psi(x)}\, dx
\end{equation}
for every Borel function $b$ such that
$b \in L^1(\mu)$ or $b$ is nonnegative.
\end{defi}

Note that translating $\psi(\cdot)$ to $\psi(\cdot - v_0)$, with $v_0\in \R^n$, leaves the moment measure unchanged. Adding a constant $\lambda$ to $\psi$ multiplies $\mu$ by $e^{-\lambda}$. It is therefore costless to impose that $\mu$ and $\mu_\psi$ are probability measures, rather than dealing with finite, non-zero measures. A classical  example from  complex geometry is given by the function
$$ \psi(x) = (n+1) \log \left[ \sum_{i=0}^n \exp \left( \frac{x \cdot v_i}{n+1} \right) \right] \quad \quad \quad \quad (x \in \RR^n) $$
where $v_0,\ldots,v_n \in \RR^n$ are $n+1$ vectors that add to zero and span $\RR^n$, and $x \cdot y$ stands for the standard
scalar product of $x,y \in \RR^n$.
A computation (see, e.g. \cite{gromov} or \cite{k_poincare}) shows
that the moment measure of $\psi$ is proportional to the uniform probability measure on the simplex
whose vertices are $v_0,\ldots,v_n$. The case of uniform measures on
Delzant polytopes  has also been studied by complex geometers in connection with the structure of toric varieties.  In general, it is not very easy to describe the convex function $\psi$ whose moment
measure is a given Borel measure $\mu$ in $\RR^n$.
For instance, see  Bunch and Donaldson \cite{BD} or Doran, Headrick, Herzog, Kantor and Wiseman
\cite{physicists} for a numerical approximation of $\psi$
in the case where $\mu$ is the uniform probability measure on a hexagon centered at the origin in $\RR^2$.

\medskip
One would like  to understand which measures $\mu$ on $\RR^n$ are moment measures  of a convex function.
The case where $\mu$ is supported on a convex body of $\RR^n$, and has a smooth density
bounded from below and from above by positive constants on this convex body, was successfully studied by Wang and Zhu \cite{WZ}, Donaldson \cite{donaldson}
and
Berman and Berndtsson \cite{BB}. Here, a convex body means a non-empty, bounded, open, convex set.
Our aim is to complete the description of moment measures, by giving necessary and sufficient conditions. As we shall see, one can go way beyond the case of smooth functions on convex bodies.

\medskip
A preliminary, somehow converse, question  is to know whether one can recover the
convex function $\psi$ from its moment measure. The answer, in general, is negative.  For instance, given a convex body $C \subset \RR^n$
and a vector $y \in \R^n$, consider the convex function
\begin{equation}  \psi(x) = \left \{ \begin{array}{cc} x \cdot y & x \in C \\ +\infty & x \not \in C \end{array} \right. \label{eq_1132}
\end{equation}
Then the moment measure of $\psi$ is just a multiple of $\delta_{y}$, the Dirac measure at the point $y$. It is therefore impossible to recover any relevant information on $\psi$ and $C$
from the moment measure. The obstacle seems to be the {\it discontinuity} of the convex function $\psi$.
We shall see below that convex functions from $\RR^n$ to $\RR \cup \{ + \infty \}$ that are {\it continuous} are
much more well-behaved. Of course, when we refer to continuity at a point where the function is infinite, we mean that the limit at this point is $+\infty$.  We shall see that a property weaker than continuity is in fact sufficient for our purposes. This property deserves its own terminology, not only for the writing convenience,  but also because it will prove to be  natural in the present context.

\begin{defi}We say that a convex function $\psi: \RR^n \rightarrow \RR \cup \{ + \infty \}$ is \emph{essentially-continuous}
if $\psi$ is lower semi-continuous and if the set of points where $\psi$
is discontinuous has zero $\cH^{n-1}$-measure. Here,  $\cH^{n-1}$ is the $(n-1)$-dimensional
Hausdorff measure. \label{def_1055}
\end{defi}

Before going on, we need to make a  few comments about Definition \ref{def_1055}.  First, in dimension one,
essential-continuity is equivalent to continuity. Next, note that  a convex function from $\RR^n$
to $\RR \cup \{ + \infty \}$ is automatically continuous outside
$$ \partial \{ \psi < + \infty \}. $$
Definition \ref{def_1055} is thus concerned only with the boundary behavior of the function
$\psi$ near the set  $\partial \{ \psi < +\infty \}$. In particular, any finite convex function $\psi: \RR^n \rightarrow \RR$ is essentially-continuous. The requirement that $\psi$ is lower semi-continuous
is actually not very drastic, and has no geometric consequences.
It only amounts to the convenient fact that the epigraph of $\psi$ is a closed set, while the second part of Definition~\ref{def_1055} puts severe restrictions on supporting hyperplanes of this convex set. Note that the lower semi-continuity of $\psi$ ensures continuity at points where $\psi$ is $+\infty$. Thus a convex function $\psi$ is essentially-continuous if and only if it is lower semi-continuous  and  if
$$\cH^{n-1}\big( \{ x \in \partial \{\psi < +\infty \} \; ; \ \psi(x)<+\infty\}\big) =0, $$
or equivalently,
$$ \{ \psi < +\infty \} = A \cup B $$
where $A$ is an open convex set and $B\subseteq \partial A$ is such that $\cH^{n-1}(B) = 0$.

Our first step is to establish some necessary conditions that are satisfied by
the moment measure of any essentially-continuous, convex function.

\begin{prop} Let $\psi: \RR^n \rightarrow \RR \cup \{+\infty \}$ be an essentially-continuous,
convex function with $0 < \int_{\RR^n} \exp(-\psi) < +\infty$. Then the moment
measure of $\psi$ is not supported in a hyperplane, and its barycenter lies at the origin.
\label{prop_1107}
\end{prop}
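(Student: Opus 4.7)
\emph{Non-degeneracy of the support.} For this I would argue by contradiction: assuming $\mu$ is supported in a hyperplane $\{y\cdot\theta = c\}$, the defining identity~(\ref{eq:defimage}) gives $\nabla\psi(x)\cdot\theta = c$ at a.e.\ $x \in K := \{\psi<+\infty\}$, and convexity of $\psi$ then upgrades this to the statement that for every $x_0$, the map $t\mapsto\psi(x_0+t\theta)$ is affine with slope $c$ on the interval where it is finite. Integrability together with Fubini forbids this line from extending to $-\infty$ when $c>0$, to $+\infty$ when $c<0$, or in either direction when $c=0$ — otherwise $\int e^{-\psi_0-ct}\,dt$ diverges along the line. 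Consequently at least one truncation endpoint is a genuine boundary point of $K$, and as $x_0$ varies these endpoints trace out the graph of a convex (resp.\ concave) function on an open subset of $\theta^\perp$, a set of positive $\mathcal{H}^{n-1}$-measure contained in $\partial K$ on which $\psi$ is finite (by the affine extension). Since $\psi\equiv+\infty$ immediately outside $K$, this yields a discontinuity set of positive $\mathcal{H}^{n-1}$-measure, contradicting essential continuity.

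\emph{Barycenter at the origin.} For each coordinate $i$ I aim to show $\int_{\RR^n}\partial_i\psi\cdot e^{-\psi}\,dx = 0$, rewriting it as $-\int_{\RR^n}\partial_i(e^{-\psi})\,dx$. Splitting $x = (x',t)$ with $t = x_i$ and setting $f(t):=e^{-\psi(x',t)}$, the plan is to use Fubini to reduce to the one-dimensional identity $\int_\RR f'(t)\,dt = 0$ for a.e.\ $x'\in\RR^{n-1}$ (the Fubini step requires $\int|\nabla\psi|e^{-\psi}<+\infty$, which I would justify by a Gaussian-tilt truncation $\psi + |x|^2/R$ and a limit argument, or by standard integrability estimates for log-concave measures). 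The slice $f$ is log-concave on $\RR$, bounded (since $\psi$, being convex with $\int e^{-\psi}<+\infty$, is bounded below), and locally Lipschitz on the interior of its effective support $(a,b)$. The decisive step is to establish $f(a^+) = f(b^-) = 0$ on a.e.\ slice: the set $D := \{x\in\partial K : \psi(x)<+\infty\}$ has $\mathcal{H}^{n-1}(D) = 0$ by essential continuity, and since the coordinate projection $\pi_i:\RR^n\to\RR^{n-1}$ forgetting $x_i$ is $1$-Lipschitz, $\mathcal{H}^{n-1}(\pi_i(D)) = 0$ as well; for $x'\notin\pi_i(D)$ every finite endpoint of $(a,b)$ lies in $\partial K\setminus D$, forcing $\psi=+\infty$ there (so $f=0$), while endpoints at $\pm\infty$ are handled by log-concavity plus $\int f<+\infty$. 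A log-concave continuous $f$ on $[a,b]$ with vanishing boundary values is unimodal and satisfies $\int_a^b|f'|\,dt = 2\max f<+\infty$, giving $\int_a^b f'(t)\,dt = f(b) - f(a) = 0$ as required.

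\emph{Main obstacle.} I expect the technical heart of the argument to be the boundary-vanishing claim $f(a^+) = f(b^-) = 0$ on a.e.\ slice — the precise channel through which essential continuity enters both parts. The $1$-Lipschitz projection estimate $\mathcal{H}^{n-1}(\pi_i(D))\le\mathcal{H}^{n-1}(D)$ is exactly calibrated to the $\mathcal{H}^{n-1}$ threshold of Definition~\ref{def_1055}, and any strictly weaker notion of continuity would fail to control a positive-measure family of slices (cf.\ the example~(\ref{eq_1132})). The remaining ingredients — the 1D integration by parts for the barycenter, the convexity analysis along $\theta$-lines for the non-degeneracy, and the Fubini justification via the Gaussian tilt — are more standard once the boundary behaviour is in hand.
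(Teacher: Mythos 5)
Your proof is correct in outline, and the barycenter half is essentially the paper's Lemma~\ref{lem_1439}: prove $\int |\nabla \rho| \, dx < \infty$, use Fubini to reduce to slices, use essential continuity (via the $1$-Lipschitz projection, which is exactly Lemma~\ref{ess}) to guarantee that a.e.\ slice $t \mapsto \rho(y,t)$ is continuous, vanishes at infinity, and is locally Lipschitz in the interior of its support, and conclude $\int f' = 0$. Where you are vague is the integrability of $\nabla \rho$: the ``Gaussian-tilt truncation'' idea would need real work to exchange limits with integrals, whereas the ``standard integrability estimate'' you allude to is precisely what the paper does and is worth spelling out --- for each $y$, the total variation of the log-concave slice $t \mapsto \rho(y,t)$ is bounded by $2 \sup_t \rho(y,t) = 2 e^{-\psi_1(y)}$ with $\psi_1(y) = \inf_t \psi(y,t)$ convex and coercive, so the Fubini hypothesis holds. (The paper even notes this is a genuinely $L^1$ and not $L^{1+\eps}$ phenomenon.)

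The non-degeneracy half is where you take a genuinely different route. The paper proves the barycenter claim first, and then Lemma~\ref{lem_1631} only has to rule out hyperplanes \emph{through the origin}: there, $\nabla\psi\cdot\theta = 0$ a.e.\ forces a.e.\ slice $t \mapsto \rho(y,t)$ to be a continuous, locally Lipschitz log-concave function with zero a.e.\ derivative, hence constant on $\RR$, hence of integral in $\{0,+\infty\}$ --- Fubini kills it. You instead attack arbitrary hyperplanes $\{y\cdot\theta = c\}$ directly via a boundary-geometry argument: each slice is affine with slope $c$, so it must truncate at a finite boundary point of $K$ where $\psi$ remains finite (lower semi-continuity), and these endpoints over an open set of slices form a positive-$\cH^{n-1}$-measure subset of the discontinuity set --- contradicting essential continuity. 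This is correct and self-contained, and it avoids having to prove the barycenter statement first; the price is more delicate bookkeeping about boundary graphs. Two minor precision points: the affine-slice claim holds for a.e.\ $x_0 \in \theta^\perp$ (by Fubini), not ``for every $x_0$''; and for $c = 0$ your truncation must happen at \emph{both} ends of a.e.\ slice (otherwise the slice is a nonzero constant on a half-line, already non-integrable), which is fine but worth noting. Both decompositions ultimately channel essential continuity through control of a.e.\ slices, exactly as you identify.
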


Proposition~\ref{prop_1107} will be proven in Section \ref{sec2}, in which we collect
a few simple properties of convex functions, log-concave densities, and moment measures.
It turns out that these \emph{necessary} conditions
are also \emph{sufficient}. Our main result below  is indeed that there is a bijection
between essentially-continuous, convex functions modulo  translations,
and finite measures on $\RR^n$ satisfying the conclusion of Proposition~\ref{prop_1107}.

\begin{theo} Let $\mu$ be a  Borel  measure on $\RR^n$ such that
\begin{enumerate}
\item[(i)] $\displaystyle 0 < \mu(\RR^n) < +\infty$.
\item[(ii)] The measure $\mu$ is not supported in a lower-dimensional subspace.
\item[(iii)] The barycenter of $\mu$ lies at the origin (in particular, $\mu$ has finite first moments).
\end{enumerate}
Then there exists a convex function $\psi: \RR^n \rightarrow \RR \cup \{ + \infty \}$, essentially-continuous, such that $\mu$ is the moment measure of $\psi$.
Moreover, such $\psi$ is  uniquely determined up to translation.
\label{thm_1459}
\end{theo}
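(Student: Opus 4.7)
The plan is to prove Theorem~\ref{thm_1459} by a variational argument in the spirit of Berman--Berndtsson~\cite{BB}, adapted to the essentially-continuous class. After normalizing $\mu$ to be a probability measure, I would introduce the functional
\begin{equation*}
F(\psi) \;=\; \log \int_{\RR^n} e^{-\psi(x)}\,dx \;-\; \int_{\RR^n} \psi^*(y)\,d\mu(y),
\end{equation*}
defined on the class of essentially-continuous convex $\psi:\RR^n\to\RR\cup\{+\infty\}$ with $0<\int e^{-\psi}<\infty$, where $\psi^*$ is the Legendre transform. The formal first variation uses the envelope identity $(\psi+tv)^*(y) = \psi^*(y) - t\,v(\nabla\psi^*(y)) + o(t)$, so vanishing of the first variation at $\psi$ reads $(\nabla\psi^*)_\#\mu = e^{-\psi}\,dx/\int e^{-\psi}$. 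Since $\nabla\psi$ and $\nabla\psi^*$ are mutually inverse on their loci of smoothness, this is precisely the moment-measure identity~(\ref{eq:defimage}); a maximizer of $F$ will be the desired $\psi$.

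Three properties of $F$ are the pillars of the argument. First, $F$ is invariant under $\psi\mapsto\psi+c$ (since $\mu(\RR^n)=1$) and under $\psi\mapsto\psi(\cdot-v_0)$ (since $\int v_0\cdot y\,d\mu(y)=0$ by hypothesis~(iii)), so $F$ descends to the quotient by translations. Second, $F$ is concave along any linear interpolation $\psi_t=(1-t)\psi_0+t\psi_1$: the term $\log\int e^{-\psi_t(x)}\,dx$ is concave in $t$ by Pr\'ekopa's theorem applied to the jointly convex $(x,t)\mapsto\psi_t(x)$, while the pointwise bound $\psi_t^*(y)\le(1-t)\psi_0^*(y)+t\psi_1^*(y)$ makes $t\mapsto\int\psi_t^*\,d\mu$ convex and hence $-\int\psi_t^*\,d\mu$ concave. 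Third, hypotheses~(ii) and~(iii) together provide coercivity: non-degeneracy of $\mu$ prevents $\psi^*$ from being flat in any direction, while the barycenter condition cancels the translational ambiguity.

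With these in hand, my plan proceeds in five steps. (a)~Check that $\sup F<+\infty$ and extract a maximizing sequence $\psi_k$ normalized so that $\int e^{-\psi_k}=1$ and the barycenter of $e^{-\psi_k}\,dx$ sits at the origin. (b)~Use standard tail estimates and tightness for centered log-concave probability measures, together with hypothesis~(ii) as a quantitative lower bound on $\int\psi_k^*\,d\mu$, to pass to a subsequential limit $\psi_\infty$. (c)~Invoke upper semicontinuity of $F$ along this convergence to conclude that $\psi_\infty$ attains the supremum. (d)~Derive the Euler--Lagrange identity, tested against continuous bounded $b$, to obtain~(\ref{eq:defimage}) and hence that $\mu$ is the moment measure of $\psi_\infty$. (e)~Uniqueness up to translation follows from strict concavity of $F$ modulo translations: two maximizers $\psi_0,\psi_1$ would force equality throughout Pr\'ekopa's inequality along $\psi_t$, and the known equality case implies $\psi_1=\psi_0(\cdot-v_0)+c$ for some $v_0$ and $c$.

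The hardest step, in my view, will be~(b) combined with the essential-continuity requirement on the limit. Standard compactness for convex functions yields only convergence on the interior of the limiting domain and only lower semicontinuity in the limit; promoting this to an essentially-continuous $\psi_\infty$ and simultaneously ensuring that~(\ref{eq:defimage}) persists across the boundary of $\{\psi_\infty<+\infty\}$ is the technical heart of the proof. This is precisely where the assumptions on $\mu$ must be used quantitatively: a degenerate limit of the form~(\ref{eq_1132}), whose moment measure is a Dirac mass, must be excluded using the non-degeneracy hypothesis~(ii), and the resulting function must be shown to satisfy the essential-continuity condition of Definition~\ref{def_1055}, so that Proposition~\ref{prop_1107} together with the variational identity deliver the desired moment measure.
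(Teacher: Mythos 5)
Your overall strategy---a variational argument on the functional $F(\psi)=\log\int e^{-\psi}-\int\psi^*\,d\mu$, Euler--Lagrange giving the moment-measure identity, and uniqueness from strict concavity via Pr\'ekopa's equality case---is the same as the paper's (and as Berman--Berndtsson's). But there are two genuine gaps.

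First, you restrict $F$ from the outset to essentially-continuous convex $\psi$. This class is \emph{not} closed under the compactness you need in step~(b): a pointwise limit of finite (hence essentially-continuous) convex functions can be discontinuous along $\partial\{\psi_\infty<+\infty\}$ on a set of positive $\cH^{n-1}$-measure, and nothing in your outline rules this out. The paper does the opposite: it maximizes $\cI_\mu(f)=\log\int e^{-f^*}-\int f\,d\mu$ over \emph{all} $\mu$-integrable $f$ (Proposition~\ref{prop_1756}), produces a maximizer $\varphi$ with no continuity assumption imposed on $\psi=\varphi^*$, and only then proves essential-continuity via Proposition~\ref{prop_1405}. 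The mechanism there is a boundary perturbation: one replaces $\varphi$ by $\varphi_\eps=\max\{\varphi,\varphi+\eps(y\cdot\theta)-1\}$, shows $\int(\varphi_\eps-\varphi)\,d\mu=o(\eps)$ but $\log\int e^{-\psi_\eps}-\log\int e^{-\psi}\geq c\,\eps\,\cH^{n-1}(A^{k})$ where $A^k$ is the bad boundary set with $\psi\le k$, so that maximality of $\varphi$ forces $\cH^{n-1}(A^k)=0$. You correctly flag this as ``the technical heart'' but do not supply the idea, and without it your step~(c) cannot land you in the class you started from.

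Second, ``standard tail estimates and tightness for centered log-concave measures'' is not enough for step~(b). What you actually need is a quantitative coercivity estimate of the form $\left(\int e^{-\varphi}\right)^{1/n}\left(\int[\varphi-\inf\varphi]\,d\mu+1\right)\geq c_\mu>0$ for all $\mu$-integrable convex $\varphi$ with $\varphi(0)=0$, which the paper proves via the Rogers--Shephard inequality (Lemma~\ref{lem_2116}) and then converts, via the functional Santal\'o inequality and a centroid bound of Fradelizi, into a lower bound for $\int\varphi\,d\mu$ in terms of $\left(\int e^{-\psi}\right)^{1/n}$ (Lemma~\ref{lem_1119}). This is what actually bounds $\sup_\ell\int\varphi_\ell\,d\mu$ along a maximizing sequence and hence feeds the compactness lemma (Lemma~\ref{lem_1600}). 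These convex-geometric inputs are load-bearing and cannot be replaced by generic tightness arguments; the concavity and translation invariance of $F$ that you list as ``pillars'' do not by themselves prevent a maximizing sequence from degenerating.
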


The picture now fits nicely with constructions from optimal transportation theory. Indeed, once the existence of $\psi$ for a given $\mu$ has been established, as in Theorem~\ref{thm_1459}, then we see that $\nabla\psi$ is the (unique) convex gradient map pushing forward $e^{-\psi(x)} \, dx$ to $\mu$. This map is the \emph{quadratic-optimal map}, also known as the \emph{Brenier map}, between these measures (see for instance~\cite{mccann95}). In particular, if $\mu$ is absolutely-continuous with respect to the Lebesgue measure, and if we write $d\mu(x) = g(x) \, dx$ with $g\in L^1$, then, as established by McCann~\cite{mccann}, the Monge-Amp\`ere equation
\begin{equation}\label{ma}
e^{-\psi(x)} = g(\nabla\psi(x)) \, \det D^2\psi(x)
\end{equation}
is verified in a measure-theoretic sense. Namely, this change-of-variables equation is verified $e^{-\psi(x)} \, dx$-almost everywhere, provided that the Hessian $D^2\psi$ is understood (almost everywhere) in the sense of Aleksandrov, i.e. as the second order term in the Taylor expansion, or as the derivative of the sub-gradient map, or as the density of the absolutely-continuous part of the distributional Hessian; some extra explanations will be given later on.
Of course, under further assumptions on $g$, as in Berman and Berndtsson~\cite{BB}, one can call upon the regularity theory for Monge-Amp\`ere equations and conclude that we have a classical solution to~\eqref{ma}.

\medskip
In dimension one, it is not very difficult to express the function $\psi$ from Theorem~\ref{thm_1459} in terms of $\mu$.
For instance, when the support of $\mu$ is connected, the convex function $\psi$ is differentiable and it satisfies
the equation
$$\left( \psi^{-1} \right)^{\prime} \left \{ -\log \left( \int_y^{\infty} t d \mu(t) \right) \right\} = \frac{1}{y}.
$$ where $\psi^{-1}$ is any local inverse of $\psi$, that is, $\psi^{-1}$ may stand for any function defined on an interval $I \subset \RR$
with $\psi \circ \psi^{-1} = Id$. There are easy cases in higher dimension as well.
For instance, the moment measure of the convex function $|x|^2/2 = (x \cdot x) / 2$ is proportional
to the standard Gaussian measure in $\RR^n$. The uniform measure on the sphere $S^{n-1}=\{x\in \R^n\; ; \ |x|=1\}$ is proportional to the moment measure of the convex function $\psi(x)=|x|$. The uniform probability measure on the cube $[-1,1]^n$ is proportional to the moment measure of the convex
function
\begin{equation}
 \psi(x) = \sum_{i=1}^n 2 \log \cosh \left( \frac{x_i}{2} \right) \quad \quad \quad \quad \text{for} \ x = (x_1,\ldots,x_n) \in \RR^n.  \label{eq_1455_}
 \end{equation}
By linear invariance, we may express the uniform probability measure on a centered parallelepiped in $\RR^n$ as the moment measure
of
$$ x \to \psi(T(x)) + C $$
for some linear map $T: \RR^n \rightarrow \RR^n$ and $C \in \RR$, where $\psi$ is as in (\ref{eq_1455_}).

\medskip
Our proof of Theorem~\ref{thm_1459} follows the variational approach promoted by Berman and Berndtsson~\cite{BB},
which is a distant cousin of the original approach by Minkowski (see Schneider \cite[Section 7.1]{schneider} and references therein). Our treatment is however different than the one in~\cite{BB}.
We shall see that Theorem~\ref{thm_1459} is intimately related to a variant of the Pr\'ekopa
inequality, which is described in Section~\ref{sec3} below. The proof of Theorem~\ref{thm_1459} (existence and unicity)
is completed in Section~\ref{sec4}. In Section \ref{sec5} we discuss potential generalizations of Theorem~\ref{thm_1459} and
its relations to the Minkowski problem and to the logarithmic Minkowski problem.

\medskip A final warning: As the reader will notice, and as is apparent from the number of lemmas, some parts of the paper may seem a bit technical, hopefully not too much. This is partly due to the fact that we need to pay close attention to the domain of the convex functions and to the support of the measures.  Actually, these issues are not purely technical: they encode part of the geometry of the problem and they ensure clean statements. We hope that the reader will be convinced that they should not be overlooked.

\medskip {\it Acknowledgement.} We would like to thank Bo Berndtsson and Yanir Rubinstein
for their enlightening explanations concerning the theory of K\"ahler-Einstein equations in toric varieties and for their interest in this work.
The second named author would also like to thank the Fondation des Sciences Math\'ematiques de Paris (FSMP)
for funding his visit to Paris during which
most of this work was done, and to the European Research Council (ERC) for supporting his research.


\section{Basic Properties of moment measures}
\label{sec2}

In this section, we establish several useful properties of the moment measures $\mu$  and of the log-concave measures $\mu_\psi$.
The main use of {\it essential continuity}
will be through the following lemma.

\begin{lemma} Let $\psi: \RR^n \rightarrow \RR \cup \{ + \infty \}$
be an essentially-continuous, convex function. Fix a vector $0 \neq \theta \in \RR^n$
and let $H = \theta^{\perp} \subset \RR^n$ be the hyperplane orthogonal to $\theta$. Then, for $\cH^{n-1}$-almost every $y \in H$, the function
\begin{eqnarray}
\R & \to & \R\cup\{+\infty\} \nonumber \\
t &\to & \psi(y + t \theta) \label{eq_1722_}
\end{eqnarray}
is continuous on $\RR$, and  locally-Lipschitz in the interior of the interval in which it is finite. \label{ess}
\end{lemma}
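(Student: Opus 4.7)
The plan is to reduce the statement to a one-dimensional analysis of the restriction combined with a Lipschitz-projection argument applied to the discontinuity set of $\psi$.

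First, I would record the structure of the restriction for an \emph{arbitrary} $y\in H$. Writing $f_y(t)=\psi(y+t\theta)$, the function $f_y:\R\to\R\cup\{+\infty\}$ is convex (restriction of a convex function to a line) and lower semi-continuous (restriction of an lsc function). Let $I_y=\{t\in\R:f_y(t)<+\infty\}$, a (possibly empty or unbounded) interval. A standard fact about convex functions of one real variable shows that $f_y$ is finite and locally-Lipschitz on $\mathrm{int}(I_y)$, and that $f_y$ tends to $f_y(t_0)$ as $t\to t_0$ from the inside of $I_y$ whenever $t_0$ is an endpoint of $I_y$ with $f_y(t_0)<+\infty$ (this is an immediate consequence of convexity together with the lsc inequality $\liminf_{t\to t_0}f_y(t)\ge f_y(t_0)$). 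Moreover, at any point where $f_y=+\infty$, the lsc property forces continuity (the set $\{f_y>M\}$ is open for every $M$). Consequently the only possible discontinuity points of $f_y$ on $\R$ are the \emph{finite} endpoints of $I_y$ that belong to $I_y$, and a discontinuity there occurs precisely because $f_y\equiv+\infty$ on the complementary side of $t_0$ while $f_y(t_0)$ is finite.

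Second, I would argue that if $t_0$ is such a discontinuity point of $f_y$, then $x_0:=y+t_0\theta$ is a point of discontinuity of $\psi$ on $\R^n$. Indeed, approaching $x_0$ along the line $y+\R\theta$ from the side on which $f_y$ equals $+\infty$ yields $\psi\to+\infty$, while $\psi(x_0)=f_y(t_0)$ is finite, so $\psi$ cannot be continuous at $x_0$ in the extended-real sense used in the paper. Hence the set of ``bad'' parameters
$$ B:=\{y\in H : f_y\text{ has a discontinuity on }\R\} $$
is contained in $\pi(D)$, where $D\subset\R^n$ is the set of points at which $\psi$ is discontinuous and $\pi:\R^n\to H$ is the orthogonal projection along $\theta$.

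Third, I would invoke essential-continuity: by Definition \ref{def_1055}, $\cH^{n-1}(D)=0$. Since $\pi$ is $1$-Lipschitz, $\cH^{n-1}$ does not increase under $\pi$, giving $\cH^{n-1}(\pi(D))\le\cH^{n-1}(D)=0$, and therefore $\cH^{n-1}(B)=0$. For every $y\in H\setminus B$, the function $f_y$ is continuous on all of $\R$, and, being convex and finite on $\mathrm{int}(I_y)$, it is locally-Lipschitz there by the standard one-dimensional fact recalled above. This proves the lemma.

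The only delicate point is the one-dimensional boundary analysis ensuring that a discontinuity of $f_y$ at a finite endpoint $t_0\in I_y$ really manifests itself as a discontinuity of $\psi$ at $y+t_0\theta$ in $\R^n$ (and not merely as a tangential artifact). Once this is settled, the passage from ``$\cH^{n-1}(D)=0$'' to ``$\cH^{n-1}$-a.e. line misses $D$'' is the clean Lipschitz-projection punchline; no serious geometric obstacle lies beyond that.
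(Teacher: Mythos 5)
Your proof is correct and takes essentially the same approach as the paper's: project the discontinuity set $D$ of $\psi$ onto $H$, use the fact that orthogonal projection is $1$-Lipschitz (hence does not increase $\cH^{n-1}$), and note that for $y\notin\pi(D)$ the line $y+\R\theta$ avoids $D$. The paper leaves the one-dimensional classification implicit — it simply observes that continuity of $\psi$ at every point of a line forces continuity of the restriction to that line, which sidesteps the detailed endpoint analysis you carried out — but your extra care is harmless and the proofs are the same in substance.
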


\begin{proof} The set of discontinuity points of $\psi$ has a zero $\cH^{n-1}$-measure,
and the same is true for  its orthogonal projection onto the hyperplane $H$. Therefore the function in (\ref{eq_1722_})
is continuous in $t$, for $\cH^{n-1}$-almost any $y \in H$. The function in (\ref{eq_1722_})  is convex, hence it is locally-Lipschitz
in the interior of the interval in which it is finite. \end{proof}

 A {\it log-concave function}
is a function of the form $\exp(-\psi)$ where $\psi: \RR^n \rightarrow \RR \cup \{ + \infty \}$ is convex.
A log-concave function is degenerate if it vanishes almost-everywhere in $\RR^n$.
It is well-known (see, e.g., \cite[Lemma 2.1]{K_psi}) that a non-degenerate, log-concave function
$\exp(-\psi)$ is integrable on $\RR^n$
if and only if
\begin{equation}  \liminf_{|x| \rightarrow \infty} \frac{\psi(x)}{|x|} > 0, \label{eq_1726} \end{equation}
where $| \cdot |$ is the standard Euclidean norm in $\RR^n$. Equivalently, $\exp(-\psi)$ is integrable on $\RR^n$ if and only if
$$  \lim_{x \rightarrow \infty} \psi(x) = +\infty.$$
Any log-concave function is differentiable almost everywhere in $\RR^n$. The next Lemma establishes an integrability result for log-concave gradients, that will ensure finite first moments for the moment measure of a convex function. The second part of the lemma gives more information on the barycenter under the assumption that the convex function is essentially-continuous. Throughout the paper, $\partial^i$ and $\partial^{i j}$ will stand for the partial derivatives (of first and second order, respectively) in the canonical basis of $\R^n$.

\begin{lemma} Let $\rho: \RR^n \rightarrow [0, +\infty)$ be an integrable, log-concave function. Then,
\begin{equation}  \int_{\RR^n} |\nabla \rho| < +\infty. \label{eq_1756}
\end{equation}
Furthermore, in the case where $\rho = \exp(-\psi)$ with $\psi$ essentially-continuous, we have
$$  \int_{\RR^n} \partial^i \rho = 0 \quad \quad \quad \quad (i=1,\ldots,n). $$
\label{lem_1439}
\end{lemma}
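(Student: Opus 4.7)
The proof naturally splits into two essentially independent parts, and my plan is to handle each by reducing to one-dimensional slices via Fubini.

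For the integrability statement $\int |\nabla\rho| < +\infty$, I would start from the decay estimate guaranteed by (\ref{eq_1726}): since $\rho=e^{-\psi}$ is nondegenerate, log-concave and integrable, there exist constants $c>0$ and $C\in\RR$ with $\psi(x)\ge c|x|-C$, hence $\rho(x)\le e^{C}e^{-c|x|}$. It is enough to bound $\int|\partial^i\rho|$ for each $i$; fixing $i=n$ and writing $x=(y,t)\in\RR^{n-1}\times\RR$, I would apply Fubini. For a.e.\ $y$ the slice $\rho_y(t):=\rho(y,t)$ is log-concave and integrable on $\RR$, hence continuous on the interior $(a_y,b_y)$ of its support, where it first increases up to its maximum and then decreases; an elementary computation using the Lebesgue derivative then gives
\begin{equation*}
\int_{\RR}|\partial_t\rho_y(t)|\,dt \;\le\; 2\sup_{t\in\RR}\rho_y(t).
\end{equation*}
The supremum on the right is bounded by $e^{C}e^{-c|y|}$ (since $|(y,t)|\ge|y|$), and this function is integrable on $\RR^{n-1}$, yielding (\ref{eq_1756}).

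For the vanishing of $\int\partial^i\rho$, essential-continuity of $\psi$ is exactly what I need to rule out the boundary jumps that a general log-concave density may exhibit, and which form the obstruction visible in the example (\ref{eq_1132}). Fixing again $i=n$, I would invoke Lemma~\ref{ess} with $\theta=e_n$: for $\cH^{n-1}$-a.e.\ $y$ in the hyperplane $H=e_n^{\perp}$, the map $t\mapsto\psi(y+te_n)$ is continuous on $\RR$ and locally Lipschitz on the interior of the interval where it is finite. Consequently $\rho_y$ is continuous on all of $\RR$ (vanishing where $\psi=+\infty$), locally absolutely continuous on the interior of $\{\rho_y>0\}$, and tends to $0$ at the boundary of this set and at $\pm\infty$ (using integrability and the growth of $\psi$). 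An exhaustion of $\{\rho_y>0\}$ by compact subintervals, together with the fundamental theorem of calculus on each piece, gives $\int_{\RR}\partial_t\rho_y(t)\,dt=0$ for $\cH^{n-1}$-a.e.\ $y\in H$. The first part of the lemma makes Fubini legitimate, so integrating over $y$ yields $\int_{\RR^n}\partial^n\rho=0$; the argument for general $i$ is identical up to relabelling coordinates.

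The only genuinely delicate step is the second one: one must verify that the pointwise one-dimensional fundamental theorem of calculus applies despite the fact that $\rho_y$ is a priori only locally Lipschitz \emph{inside} the finiteness interval of $\psi$ along the slice. This is where Lemma~\ref{ess} does all the work---it promotes local Lipschitzness inside to continuity on the whole line, which is precisely what kills the boundary contributions when one passes from $\int_{a_y+\varepsilon}^{b_y-\varepsilon}\partial_t\rho_y=\rho_y(b_y-\varepsilon)-\rho_y(a_y+\varepsilon)$ to the limit $\varepsilon\to 0$.
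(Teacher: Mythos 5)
Your proof is correct and follows essentially the same route as the paper's: reduce to one-dimensional slices via Fubini, bound $\int_{\RR}|\partial_t\rho_y|\,dt$ by $2\sup_t\rho_y(t)$ using the unimodality of a one-dimensional log-concave function, and for the second part invoke Lemma~\ref{ess} to get continuity of the slices on all of $\RR$ so that the fundamental theorem of calculus kills boundary contributions. The only cosmetic difference is in showing $\int_{\RR^{n-1}}\sup_t\rho(y,t)\,dy<\infty$: you use the explicit exponential decay $\rho\le e^{C-c|x|}$ from (\ref{eq_1726}), whereas the paper observes that $y\mapsto\inf_t\psi(y,t)$ is itself a convex function satisfying (\ref{eq_1726}) and hence has integrable exponential — the two arguments are interchangeable.
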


In order to appreciate this simple
observation, let us remark that there exist integrable and smooth log-concave functions  $\rho: \RR^n \rightarrow [0, +\infty)$ such that $\nabla \rho\notin L^{1+\eps}(\R^n)$ for any $\eps > 0$.

\begin{proof}
To prove (\ref{eq_1756}), it suffices to show that
\begin{equation}
 \int_{\RR^n} |\partial^i \rho| < +\infty. \label{eq_1634_}
 \end{equation}
 for any $i=1,\ldots,n$. Without loss of generality fix $i=n$ and write, for $x \in \RR^n$,
 $$ x = (y,t) \quad \quad (y \in \RR^{n-1}, t =x_n\in \RR). $$
For any fixed $y \in \RR^{n-1}$, the function $t \to \rho(y,t)$ is log-concave, non-negative, locally-Lipschitz in the interior of the interval
in which it is positive, and tends to $0$ at $\pm \infty$ by~\eqref{eq_1726}; in particular this function is non-decreasing on a half-line, and non-increasing on the complement. So we have,
$$ \int_{-\infty}^{\infty} \left | \frac{\partial \rho(y,t)}{\partial t} \right| \, dt \leq 2 \sup_{t \in \RR} \rho(y,t). $$
By Fubini,
\begin{align*}
 \int_{\RR^n} |\partial^i \rho(x)| \, dx   = \int_{\RR^{n-1}} \int_{-\infty}^{\infty} \left | \frac{\partial \rho(y,t)}{\partial t} \right| \, dt \, dy \leq 2 \int_{\RR^{n-1}} \exp \left(
-\inf_{t \in \RR} \psi(y,t)
\right) \, dy.
\end{align*}
The function $\psi_1(y) = \inf_{t \in \RR} \psi(y,t)$ is convex. Since $\psi$ satisfies (\ref{eq_1726}) then $\psi_1$ satisfies
$$  \liminf_{|y| \rightarrow \infty} \frac{\psi_1(y)}{|y|} > 0. $$
Hence $\exp(-\psi_1)$ is integrable, and  (\ref{eq_1634_}) is proven. In order to prove
the ``Furthermore'' part, we use Lemma \ref{ess}. For almost any $y \in \RR^{n-1}$, the function $t \to \rho(y,t)$
is continuous, vanishes at infinity, and it is locally-Lipschitz in the interior of its support.
Therefore, for almost any $y \in \RR^{n-1}$,
$$
\int_{-\infty}^{\infty} \frac{\partial \rho(y,t)}{\partial t}  \, dt = 0. $$
Thanks to (\ref{eq_1634_}) we may use Fubini's theorem, and conclude that
\begin{equation*} \int_{\RR^n} \partial^i \rho = \int_{\RR^{n-1}} \left( \int_{-\infty}^{\infty} \frac{\partial  \rho(y,t)}{\partial t}  \, dt \right) \, dy = 0. \tag*{\qedhere}
\end{equation*}

\end{proof}

Next we establish an integration by parts inequality.

\begin{lemma} Suppose that $\psi: \RR^n \rightarrow \RR \cup \{+\infty \}$ is a convex function with $0 < \int
\exp(-\psi) < +\infty$. As before, we write $\mu_{\psi}$ for the measure with density $\exp(-\psi)$. Then the function $x \cdot \nabla \psi(x)$
is $\mu_{\psi}$-integrable. If $\psi$ is furthermore essentially-continuous, then also
$$ \int_{\RR^n} \left[ x \cdot \nabla \psi(x) \right] d \mu_{\psi}(x) \leq n \int_{\RR^n} e^{-\psi}. $$
\label{lem_1126}
\end{lemma}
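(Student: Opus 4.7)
The plan is to set $\rho := e^{-\psi}$, so that $\nabla\rho = -e^{-\psi}\nabla\psi$ almost everywhere and $x\cdot\nabla\psi(x)\,e^{-\psi(x)} = -x\cdot\nabla\rho(x)$. Both assertions then reduce to claims about $x\cdot\nabla\rho$: the first to $\int|x\cdot\nabla\rho|<+\infty$, the second (under essential continuity) to $-\int x\cdot\nabla\rho \le n\int\rho$. I would prove both coordinate-by-coordinate via Fubini, following the slicing strategy of Lemma~\ref{lem_1439}: fix an index $i$, decompose $x=(y,t)$ with $y\in\RR^{n-1}$ and $t=x_i$, and analyze the 1D slice $t\mapsto\rho(y,t)$, which is log-concave and integrable on $\RR$ for every $y$.

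For the integrability, each slice $\rho(y,\cdot)$ is unimodal with some mode $t_0(y)$. Two elementary one-dimensional estimates---monotonicity of $\rho(y,\cdot)$ on each side of $t_0(y)$ combined with the exponential decay from \eqref{eq_1726} gives $\int|\partial_t\rho(y,t)|\,dt \le 2\rho(y,t_0(y))$, and integration by parts on each monotone branch gives $\int|t-t_0(y)|\,|\partial_t\rho(y,t)|\,dt \le \int\rho(y,t)\,dt$---combine via the triangle inequality to yield
\[
\int_\RR |t|\,|\partial_t\rho(y,t)|\,dt \;\le\; \int_\RR\rho(y,t)\,dt + 2|t_0(y)|\,\rho(y,t_0(y)).
\]
Integrating over $y$, the first term produces $\int\rho<+\infty$; the second equals $2\int|t_0(y)|\,e^{-\psi_1(y)}\,dy$, where the convex marginal $\psi_1(y) := \inf_t\psi(y,t)$ has integrable exponential, as used in the proof of Lemma~\ref{lem_1439}. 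The remaining factor $|t_0(y)|$ can be tamed by introducing the linear perturbation $\tilde\psi_c(x) := \psi(x) - c\,x_i$, which remains log-concave integrable for $|c|$ small thanks to \eqref{eq_1726}: comparing the minimizer in $t$ of $\psi(y,\cdot)$ with that of $\psi(y,\cdot) - ct$ gives $c\,t_0(y) \le \psi_1(y) - (\tilde\psi_c)_1(y)$, and using this with both $c$ and $-c$ controls $|t_0(y)|\,e^{-\psi_1(y)}$ by quantities whose integrals in $y$ are finite via standard log-concave moment bounds. Summing over $i$ gives $\int|x|\,|\nabla\rho|<+\infty$.

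For the inequality under essential continuity, I would apply Lemma~\ref{ess} with $\theta = e_i$: for $\cH^{n-1}$-a.e.\ $y$ the slice $t\mapsto\psi(y,t)$ is continuous on $\RR$, so $\rho(y,\cdot)$ is a continuous log-concave integrable function of $t$ vanishing at the (finite or infinite) boundary of its support; in particular $t\,\rho(y,t)\to 0$ at both endpoints. Plain 1D integration by parts then gives $\int_\RR t\,\partial_t\rho(y,t)\,dt = -\int_\RR\rho(y,t)\,dt$ for such $y$, and Fubini (justified by the integrability step above) yields $\int x_i\,\partial^i\rho\,dx = -\int\rho$. Summing over $i$ produces $\int x\cdot\nabla\rho\,dx = -n\int\rho$, equivalently $\int x\cdot\nabla\psi\,d\mu_\psi = n\int e^{-\psi}$, which is the asserted inequality (in fact an equality). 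The main obstacle is the integrability step: Lemma~\ref{lem_1439} bounds $\int|\nabla\rho|$ but does not automatically upgrade to $\int|x|\,|\nabla\rho|$, and controlling the mode location $t_0(y)$ in each slice requires the perturbation argument above. Once integrability is in hand, the inequality itself is a clean slicewise application of integration by parts whose key input is essential continuity, needed to kill the boundary contributions at $\partial\{\psi<+\infty\}$; without this hypothesis, easy 1D examples (e.g.\ an affine $\psi$ on a bounded interval in $\R$) show that these boundary terms can make $\int x\cdot\nabla\psi\,d\mu_\psi$ strictly exceed $n\int e^{-\psi}$.
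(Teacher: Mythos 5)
Your approach is genuinely different from the paper's, and in the end it establishes the stronger statement that the inequality is in fact an equality (which the paper remarks on but declines to prove). The paper avoids the slicing-plus-integration-by-parts route entirely: it fixes an interior point $x_0$ of $\{\psi<+\infty\}$, observes that $\nabla\psi(x)\cdot(x-x_0)\ge\psi(x)-\psi(x_0)$ is bounded below by convexity, and applies the \emph{divergence theorem} on smooth convex compact $K\subset\{\psi<+\infty\}^\circ$ containing $x_0$. The crucial observation is that the boundary term $\int_{\partial K}e^{-\psi}\,(x-x_0)\cdot\nu_x\,d\cH^{n-1}$ is nonnegative because $K$ is convex and $x_0\in K$, so one simply drops it, getting $\int_K[\nabla\psi\cdot(x-x_0)]\,d\mu_\psi\le n\int_K e^{-\psi}$, whence integrability and the upper bound on exhaustion. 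Finally $\int[\nabla\psi\cdot x_0]\,d\mu_\psi$ is handled by Lemma~\ref{lem_1439} (finite always, zero when $\psi$ is essentially-continuous). This is slicker than your argument precisely because it sidesteps the integrability issue you correctly identify as the main obstacle: no need to upgrade $\int|\nabla\rho|<\infty$ to $\int|x|\,|\nabla\rho|<\infty$; one only needs a lower bound on the integrand plus a one-sided bound from the signed boundary term.

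On your integrability step: the idea is workable, but the perturbation argument for controlling the mode $t_0(y)$ is more elaborate than necessary. A direct route: by~\eqref{eq_1726} there are $a>0$, $b$ with $\psi(x)\ge a|x|-b$; since $\psi(y,t_0(y))=\psi_1(y)$, this gives $a|t_0(y)|\le\psi_1(y)+b$, and then $|t_0(y)|\,e^{-\psi_1(y)}\lesssim(\psi_1(y)+b)e^{-\psi_1(y)}\lesssim e^{-\psi_1(y)/2}$, which is integrable in $y$ because $\psi_1$ is convex with the same linear growth. Also, note that without essential-continuity the one-dimensional slices may have jump discontinuities at the endpoints of their supports, so the slicewise bound $\int|t-t_0|\,|\partial_t\rho|\,dt\le\int\rho\,dt$ must be taken as an inequality (which is what you wrote) rather than the equality one would get in the continuous case.

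On your closing remark about counterexamples: the claim is essentially right, but the example needs to be chosen so that $0$ is not in the interior of $\{\psi<+\infty\}$. Indeed, the paper's proof makes clear that the inequality $\int[\nabla\psi\cdot(x-x_0)]\,d\mu_\psi\le n\int e^{-\psi}$ holds for any interior point $x_0$ \emph{without} essential-continuity; so if $0\in\{\psi<+\infty\}^\circ$ one can take $x_0=0$ and the stated inequality holds regardless. A genuine one-dimensional counterexample to the inequality in the non-essentially-continuous case is $\psi(x)=ax$ on $[1,2]$ (and $+\infty$ elsewhere) with $a>\log 2$, for which $\int x\psi'e^{-\psi}-\int e^{-\psi}=e^{-a}-2e^{-2a}>0$; the interval $[1,2]$ avoids the origin, which is why the divergence-theorem argument cannot rescue it.
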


\begin{proof} The convex set $\{ \psi < +\infty \}$ has a non-empty interior, as $\exp(-\psi)$
is a non-degenerate, log-concave function. Pick a point $x_0$ in the interior of $\{ \psi < +\infty \}$.
From the convexity of $\psi$, for any point $x$ in which $\psi$ is differentiable,
\begin{align*} \nabla \psi(x) \cdot (x - x_0) \geq \psi(x) - \psi(x_0)
 \geq -C_{x_0} \end{align*}
with $C_{x_0} = \psi(x_0) - \inf \psi$. Note that $C_{x_0}$ is finite according to (\ref{eq_1726}).
We thus see that the function $x \to \nabla \psi(x) \cdot (x-x_0) $ is bounded from below.
In order to bound its integral from above,
we use
\begin{equation}  \int_{\RR^n} \left[ \nabla \psi(x) \cdot (x -x_0) \right] d \mu_{\psi}(x) \leq \sup_{K \subset \RR^n}
\int_K \left[ \nabla \psi(x) \cdot (x -x_0) \right] d \mu_{\psi}(x) \label{eq_1414_} \end{equation}
where the supremum runs over all compact sets $K$ contained in the interior of $\{ \psi < + \infty \}$.
Since $\{ \psi < +\infty \}$ is convex, it suffices to restrict attention in (\ref{eq_1414_}) to convex, compact sets $K$,
contained in the interior of $\{ \psi < + \infty \}$, that include $x_0$
in their interior. We may even enlarge $K$ a little bit and assume that it has a smooth boundary.
For such $K$, the function $\exp(-\psi)$ is Lipschitz in $K$, and we may use the divergence theorem:
\begin{align} \label{eq_1131} \int_{K} & \left[ \nabla \psi(x) \cdot (x - x_0) \right] e^{-\psi(x)} \, dx  =  \int_{K} \left[ n e^{-\psi} - div( (x - x_0) e^{-\psi} )\right] \, dx \\
& = n \int_{K} e^{-\psi} - \int_{\partial K} e^{-\psi} \left[ (x -x_0) \cdot \nu_x \right] \, dx \leq n \int_{K} e^{-\psi} \leq n \int_{\RR^n} e^{-\psi}
\nonumber
\end{align}
where $\nu_x$ is the outer unit normal to $\partial K$ at the point $x \in \partial K$, and where we used the fact that for any $x \in \partial K$,
$$  x_0 \cdot \nu_x \leq x \cdot \nu_x = \sup_{y \in K} y \cdot \nu_x $$
as $x_0 \in K$ and $K$ is convex. From (\ref{eq_1414_}) and (\ref{eq_1131}),
$$ \int_{\RR^n} \left[ \nabla \psi(x) \cdot (x -x_0) \right] d \mu_{\psi}(x)  \leq n \int_{\RR^n} e^{-\psi}. $$
We have thus shown that the function $x \to \nabla \psi(x) \cdot (x -x_0)$ is $\mu_{\psi}$-integrable,
and the integral is at most $n \int \exp(-\psi)$.
Lemma \ref{lem_1439} implies that $x \to \nabla \psi(x) \cdot x_0$ is $\mu_{\psi}$-integrable,
and that in the essentially-continuous case, we also have $\int_{\RR^n} [\nabla \psi(x) \cdot x_0] d \mu_{\psi}(x) = 0$. The conclusion of the lemma follows.
\end{proof}

The inequality of Lemma \ref{lem_1126} is in fact an equality, but we will neither use nor prove this
equality in this paper.

Recall that the support of a measure $\mu$ in $\RR^n$ is the closed
 set $Supp(\mu)$ that consists  of all points $x \in \RR^n$ with the following property: $\mu(U) > 0$ for any open set $U$ containing $x$.

\begin{lemma}\label{lemma:supp} Let $\psi: \RR^n \rightarrow \RR \cup \{ + \infty \}$ be an essentially-continuous, convex function
with $0 < \int \exp(-\psi) < +\infty$. Let $\mu$ be its moment measure. Then $Supp(\mu)$ is not contained in any
subspace $E \subsetneq \RR^n$.  \label{lem_1631}
\end{lemma}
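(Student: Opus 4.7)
The plan is to argue by contradiction. Suppose $\text{Supp}(\mu) \subset E$ for some proper linear subspace $E \subsetneq \RR^n$, and fix a unit vector $\theta \in E^\perp \setminus \{0\}$. Since $\mu$ is the push-forward of $\mu_\psi$ under $\nabla\psi$, the inclusion $\text{Supp}(\mu) \subset E$ forces $\nabla\psi(x) \in E$ for $\mu_\psi$-almost every $x$, and hence $\theta\cdot\nabla\psi(x)=0$ for Lebesgue-almost every $x$ in the open convex set $\{\psi<+\infty\}^\circ$ on which $\psi$ is differentiable.

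Set $H=\theta^\perp$ and, for $y\in H$, define the one-dimensional convex function $g_y(t):=\psi(y+t\theta)$. By Fubini, for $\cH^{n-1}$-almost every $y\in H$ one has $g_y'(t)=\theta\cdot\nabla\psi(y+t\theta)=0$ for almost every $t$ in the interior $I_y$ of $\{g_y<+\infty\}$. Lemma~\ref{ess} ensures that, for $\cH^{n-1}$-almost every $y$, the function $g_y$ is continuous on $\RR$ (with $+\infty$ admitted as a value) and locally Lipschitz on $I_y$. Since a locally Lipschitz function with a.e.\ vanishing derivative is constant, $g_y\equiv c_y$ on $I_y$ for some $c_y\in\RR$.

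The decisive step — and the only one genuinely relying on essential continuity — is to promote this constancy from $I_y$ to all of $\RR$: were $I_y$ to admit a finite endpoint $a$, the continuity of $g_y$ at $a$ with values in $\RR\cup\{+\infty\}$ would force the inside limit $c_y<+\infty$ to coincide with the outside limit $+\infty$, which is impossible. Therefore, for $\cH^{n-1}$-almost every $y\in H$, either $g_y\equiv+\infty$ or $g_y$ is a finite constant on all of $\RR$, in which case $\int_\RR e^{-g_y(t)}\,dt=+\infty$. Since $\int_{\RR^n} e^{-\psi}>0$, Fubini forces the set $Y\subset H$ of $y$'s falling into the second alternative to have positive $\cH^{n-1}$-measure; but then Fubini in the other direction yields $\int_{\RR^n} e^{-\psi}\geq\int_Y(+\infty)\,dy=+\infty$, contradicting integrability. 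I expect the boundary-continuity argument just described to be the main obstacle, as it is the sole place where essential continuity is truly used: the example~\eqref{eq_1132}, where $\psi$ is linear on a convex body and the moment measure collapses to a Dirac mass, shows that without this hypothesis the lemma genuinely fails.
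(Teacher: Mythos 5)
Your proof is correct and follows essentially the same route as the paper: fix a direction $\theta$ orthogonal to the putative subspace, use Fubini and Lemma~\ref{ess} to get that the one-dimensional slices are continuous and locally Lipschitz with a.e.\ vanishing derivative, conclude they are constant on $\RR$, and derive a contradiction with $0 < \int e^{-\psi} < +\infty$. The only cosmetic difference is that you work with $g_y(t)=\psi(y+t\theta)$ while the paper works with $\rho(y,t)=e^{-\psi(y,t)}$, and you keep a general $\theta\in E^\perp$ where the paper normalizes to $\theta=e_n$; the mechanism and the role of essential continuity are identical.
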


\begin{proof} Denote $\rho = \exp(-\psi)$. Assume by contradiction that $Supp(\mu) \subseteq \theta^{\perp}$ for a vector $\theta \in \R^n$, $|\theta|=1$.
Without loss of generality, assume that $\theta = e_n$, where
$e_n = (0,\ldots,0,1)$.
For $x \in \RR^n$, we write $ x = (y,t) \ \ (y \in \RR^{n-1}, t=x_n \in \RR) $.
According to our assumption, $0=\int |z_n| \, d\mu(z)=  \int \big|\frac{\partial \psi}{\partial x_n} \big|e^{-\psi(x)}\, dx$, so for almost every $(y,t) \in \RR^n$,
$$ \frac{\partial \rho(y,t)}{\partial t} = 0. $$
According to Lemma \ref{ess}, for almost any $y \in \RR^{n-1}$, the function
$t \to \rho(y, t) $
is continuous in $\RR$ and locally-Lipschitz in the interior of the interval $\{ t ; \rho(y,t) > 0 \}$.
 Therefore, for almost any $y \in \RR^{n-1}$,
the log-concave function $ t \to \rho(y,t) $ is constant in $\RR$ and so $\int  \rho(y,t)\, dt \in \{0, +\infty\}$.
By Fubini, the function $\rho$ cannot have a finite, non-zero integral --
in contradiction.
\end{proof}

Now we have all of the ingredients required in order to
establish the necessary conditions satisfied by
moment measures.

\begin{proof}[Proof of Proposition \ref{prop_1107}]
From Lemma \ref{lem_1439} and the definition
of the moment measure, the barycenter
of $\mu$ lies at the origin.
Lemma \ref{lem_1631} tells us that
the support of $\mu$ cannot be contained in a hyperplane through the origin.
Since its barycenter is at the origin, then $Supp(\mu)$ cannot be contained either in a hyperplane
that does not pass through the origin.
\end{proof}

The end of this section is devoted to some connections between the Legendre transform,  gradient maps and moment measures, that are at the heart of our study. The {\it subgradient} of the convex function $\psi: \RR^n \rightarrow \RR \cup \{ +\infty \}$ at the
point $x_0 \in \{ \psi < +\infty \}$ is
$$ \partial \psi(x_0) = \left \{ y \in \RR^n \, ; \, \forall x \in \RR^n, \ \psi(x) \geq \psi(x_0) + y \cdot (x - x_0)  \right \}. $$
See, e.g., Rockafellar \cite[Section 23]{rockafellar} for a thorough discussion of subgradients of convex functions.
For completeness, we write $\partial \psi(x_0) = \emptyset$ when $\psi(x_0) = +\infty$. The closed, convex set $\partial \psi(x_0)$
is non-empty whenever $\psi$ is finite in a neighborhood of $x_0$. It equals $\{ \nabla \psi(x_0) \}$ whenever $\psi$ is differentiable at $x_0$.

Let $\psi: \RR^n \rightarrow \RR \cup \{ + \infty \}$ be a function, convex or not,
which is not identically $+\infty$.
Its Legendre transform is defined as
$$ \psi^*(y) = \sup_{x \in  \RR^n} \left[ x \cdot y - \psi(x) \right] \quad \quad \quad \quad (y \in \RR^n), $$
where the supremum runs over all $x \in \RR^n$ with $\psi(x) < +\infty$.
The function  $\psi^*:\RR^n \rightarrow \RR \cup \{ + \infty \}$ is always convex and lower semi-continuous.
When $\psi$ is convex and  differentiable at the point $x$, we have
\begin{equation}  \psi^*(\nabla \psi(x)) = x \cdot \nabla \psi(x) - \psi(x). \label{eq_2156} \end{equation}
When $\psi$ is convex and lower semi-continuous, it is true that $(\psi^*)^* = \psi$.
As can be seen from (\ref{eq_1726}),  for a non-degenerate, log-concave function $e^{-\psi}$ in $\RR^n$, we have
\begin{equation}
\int_{\RR^n} e^{-\psi} <+\infty\  \Longleftrightarrow
\ 0 \ \textrm{ belongs to the interior of } \ \{ \psi^* < + \infty \}.
\label{eq:integrability}
\end{equation}

Note that formally, if $\psi$ lives on $X=\R^n$, then its Legendre transform $\psi^\ast$ lives on $X^\ast=\R^n$, which is also the space where the moment measure of $\psi$ lives. So let us keep in mind this fact that $\varphi=\psi^\ast$ and the moment measure of $\psi$ live on the same space of variables, and let us notice the following integrability property.

\begin{prop} Suppose that $\psi: \RR^n \rightarrow \RR \cup \{+\infty \}$ is a convex function with $0 < \int
\exp(-\psi) < +\infty$. Set $\vphi = \psi^*$ and denote by $\mu$ the moment measure of $\psi$. Then $\vphi$ is $\mu$-integrable.
\label{prop_1114}
\end{prop}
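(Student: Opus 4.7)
The plan is to pull the integral back to $\R^n$ via the push-forward, apply the Fenchel identity~\eqref{eq_2156} to replace $\psi^\ast\circ\nabla\psi$ by an algebraic expression in $\psi$ and $\nabla\psi$, and then control each of the two resulting terms using what has already been proved. Note that \emph{essential continuity} is not in the hypotheses here, so the plan has to avoid Lemma~\ref{lem_1439}.

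First, by the definition~\eqref{eq:defimage} of the moment measure,
\[
\int_{\R^n} |\varphi(y)|\, d\mu(y) \;=\; \int_{\R^n} \bigl|\psi^\ast(\nabla\psi(x))\bigr|\, e^{-\psi(x)}\, dx.
\]
Since $\psi$ is differentiable Lebesgue-almost everywhere in the interior of $\{\psi<+\infty\}$, and since $e^{-\psi}$ vanishes outside this interior (whose complement has zero Lebesgue measure because $e^{-\psi}$ is non-degenerate), the identity~\eqref{eq_2156} applies $\mu_\psi$-almost everywhere, yielding $\psi^\ast(\nabla\psi(x)) = x\cdot\nabla\psi(x)-\psi(x)$. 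The problem therefore reduces to showing that both $x\mapsto x\cdot\nabla\psi(x)$ and $x\mapsto \psi(x)$ belong to $L^1(\mu_\psi)$.

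The $\mu_\psi$-integrability of $x\cdot\nabla\psi(x)$ is exactly the first assertion of Lemma~\ref{lem_1126}, which holds with no essential-continuity assumption. For $\psi$ itself, I would split $\psi=\psi_+-\psi_-$ and argue as follows. On the one hand, a non-degenerate integrable log-concave function on $\R^n$ is automatically bounded, so $\inf\psi>-\infty$ and $\psi_-$ is a bounded function, giving $\int \psi_-\, e^{-\psi}<+\infty$ immediately. On the other hand, by the integrability criterion~\eqref{eq_1726} there exist $\alpha,\beta>0$ with $\psi(x)\geq \alpha|x|-\beta$, hence a pointwise bound $\psi_+(x)\, e^{-\psi(x)}\leq C\, e^{-\alpha|x|/2}$ outside a compact set, which is integrable. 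Assembling these estimates gives $|\varphi|\in L^1(\mu)$, as desired. No serious obstacle is expected; the only point requiring mild care is the $\mu_\psi$-a.e.\ validity of~\eqref{eq_2156}, which is granted by absolute continuity of $\mu_\psi$ with respect to Lebesgue measure.
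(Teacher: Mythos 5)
Your proof is correct and follows essentially the same route as the paper: pull back via the push-forward, apply the identity~\eqref{eq_2156} to reduce to the integrability of $x\cdot\nabla\psi(x)$ and of $\psi$ against $\mu_\psi$, handle the first via Lemma~\ref{lem_1126} and the second via the lower bound on $\psi$ plus the linear growth from~\eqref{eq_1726}. The paper simply writes the last step as the one-line pointwise bound $\psi\, e^{-\psi}\leq 2e^{-\psi/2}$ (together with $\psi$ bounded below), rather than splitting into $\psi_\pm$, but the content is the same.
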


\begin{proof} Recall that $\mu_{\psi}$ is the measure with density $\exp(-\psi)$. Equality (\ref{eq_2156}) is valid $\mu_{\psi}$-almost everywhere.
Thus, in order to conclude the lemma, it suffices to show that
\begin{equation}
\psi \in L^1(\mu_{\psi})\quad \quad \text{and} \quad \quad  x \cdot \nabla \psi(x) \in L^1(\mu_{\psi}). \label{eq_1736}
 \end{equation}
 The second assertion in (\ref{eq_1736}) holds in view of Lemma \ref{lem_1126}. For the first assertion,
 we use the fact that the function $\psi$ is bounded from below as it satisfies (\ref{eq_1726}).
With the help of the inequality $x e^{-x} \leq 2 e^{-x/2}$, valid for all $x \in \RR$,
we deduce that
\begin{equation}
 -\infty < \int_{\RR^n} \psi \, d \mu_{\psi} = \int_{\RR^n} \psi e^{-\psi} \leq 2 \int_{\RR^n} e^{-\psi/2} < + \infty
 \label{eq_1733}
 \end{equation}
where the upper bound follows from (\ref{eq_1726}). Thus $\psi \in L^1(\mu_{\psi})$.
\end{proof}

Let us also mention that, under the assumptions of Lemma \ref{lem_1631}, one may actually reach the conclusion that
\begin{equation}  conv(Supp(\mu)) = \overline{ \{\psi^* < +\infty \} },
\label{eq_1515}
\end{equation}
where $\overline{A}$ denotes the closure of $A$, and where for $A \subset \RR^n$ we write  $conv(A)$ for its convex hull.
This stronger conclusion will not be needed here.

What will be used below are the following  elementary observations regarding the convex hull of the support of a Borel measure.
Suppose that $\mu$ is a finite Borel measure on $\RR^n$ whose support is not contained in a lower-dimensional subspace.
First, note that the barycenter of $\mu$
is always contained in the interior of $conv(Supp(\mu))$.
Next, observe that  if a convex function $\varphi:\R^n\to \R\cup\{+\infty\}$ is $\mu$-integrable, then $\{\varphi <+\infty\}$ contains the interior of $conv(Supp(\mu))$. Indeed, otherwise $\varphi$ would be infinite in a half-space that has positive $\mu$-measure.
A quantitative version of this fact will be given in Lemma~\ref{lem_2211} below.


\section{A version of Pr\'ekopa's inequality}
\label{sec3}

In this section, we establish a \emph{subgradient} or \emph{above-tangent} version of Pr\'ekopa's inequality which will be used in the proof of the
uniqueness of $\psi$ in Theorem~\ref{thm_1459} and which also has  strong connections with the variational problem used to prove existence. But the statement is  of independent interest. Its proof relies on monotone transport (Brenier map). At the end of the section, we have included a converse statement,
that demonstrates the central role played by essential-continuity in the present context, and that is also necessary for the proof of existence of $\psi$.

\medskip
Pr\'ekopa's theorem (which is a particular case of the Pr\'ekopa-Leindler inequality when the potentials are convex) states that for a given convex function $\Phi:\R\times \R^n \to \R\cup\{+\infty\}$, the function $\lambda\to -\log \int_{\R^n} e^{-\Phi(\lambda,x)}\, dx$ is convex on $\R$. We refer to e.g.~\cite{mccann, maurey, cordero_klartag} for background and recent developments, including connections to complex analysis.

\medskip
It is possible to rewrite Pr\'ekopa's inequality using the fact that the Legendre transform linearizes infimal convolution. More precisely,
let $u_0, u_1: \RR^n \rightarrow \RR \cup \{ + \infty \}$ be any two functions, finite in a neighborhood of the origin.
The Pr\'ekopa inequality applied to the convex functions $u_0^\ast$ and $u_1^\ast$ states  that for any $0 < \lambda < 1$,
\begin{equation}
 \int_{\RR^n} e^{-[(1-\lambda) u_0 + \lambda u_1]^*} \geq \left( \int_{\RR^n} e^{-u_0^*} \right)^{1-\lambda}
\left( \int_{\RR^n} e^{-u_1^*} \right)^{\lambda}. \label{eq_1659} \end{equation}
Indeed, the reader may readily check  that we always have the Pr\'ekopa condition:
$$\forall x,y\in \R^n , \qquad
[(1-\lambda) u_0 + \lambda u_1]^* \big((1-\lambda)x + \lambda y\big)
\; \le\;  (1-\lambda) u_0^\ast(x) + \lambda u_1^\ast(y).$$
In order to deduce (\ref{eq_1659}), set  $\Phi(\lambda, z) := \inf\big\{ (1-\lambda) u_0^\ast(x) + \lambda u_1^\ast(y) \; ; \ z= (1-\lambda)x + \lambda y\big\}$ and with the notation above, apply Pr\'ekopa's theorem to the convex function $\Phi$, that dominates $(\lambda, z)\to[(1-\lambda) u_0 + \lambda u_1]^*(z)$.

\medskip
H\"older's inequality asserts the \emph{concavity} of the functional $u\to -\log \int e^{-u}$.
As it turns out, the convexity of this functional is reversed under the Legendre transform.
Indeed, the previous discussion shows that  Pr\'ekopa's inequality expresses exactly the \emph{convexity} of the functional
$$ \cJ(u) = -\log \int_{\RR^n} e^{-u^*}$$
on the set of all functions $u: \RR^n \rightarrow \RR \cup \{ + \infty \}$  that are finite in a neighborhood of the origin.
In particular, according to~\eqref{eq:integrability}, $\cJ(u) \in \RR \cup \{ + \infty \}$ is well-defined on  the (convex) set  of convex functions $u$ that are finite in a neighborhood of $0$, and $\cJ(u)$ is finite on the subset of such $u$'s that verify $\int e^{-u^\ast} >0$.
And  as far as the convexity of $\cJ$ is concerned, we can indeed deal, without loss of generality, with convex functions $u$ only,  since we always have
$$
[(1-\lambda) u_0 + \lambda u_1]^*  \le [(1-\lambda) u_0^{\ast \ast} + \lambda u_1^{\ast \ast}]^* .$$

The following theorem states that (minus) the normalized
moment measure  can be interpreted as a {\it differential} or {\it first variation} or {\it tangent} to the convex functional~$\cJ$.

\begin{theo}[Pr\'ekopa's theorem revisited]
Let $e^{-\psi_0}$ and $e^{-\psi_1}$ be two log-concave functions on $\R^n$ with  $0<\int e^{-\psi_0} <+\infty$
and such that $\psi_1$ is not identically $+\infty$. Assume that $\psi_0$ is essentially-continuous.
Then, writing $\varphi_0=\psi_0^\ast$ and $\varphi_1=\psi_1^\ast$, we have
\begin{equation} \log \int_{\RR^n} e^{-\psi_0} -\log\int_{\RR^n} e^{-\psi_1} \ge  \int_{\RR^n} (\varphi_0- \varphi_1)\, d\overline\mu, \label{eq_1050} \end{equation}
where $\overline\mu$ is the probability measure on $\RR^n$ that is proportional to the moment measure of $\psi_0$, i.e. $\overline\mu$ is the push-forward of the probability density $ \frac{e^{-\psi_0(x)}}{\int e^{-\psi_0}}\, dx$ under the map $\nabla \psi_0$.
\label{thm_prekopa}
\end{theo}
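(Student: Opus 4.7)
The plan is to use the Brenier optimal transport map from $e^{-\psi_0}dx$ to $e^{-\psi_1}dx$ together with the Monge--Ampère equation, the pointwise matrix inequality $\log\det A\le\mathrm{tr}(A)-n$ for $A\ge 0$, and an integration by parts in the spirit of Lemma~\ref{lem_1126}. Before starting I dispose of trivial cases. If $\int e^{-\psi_1}=+\infty$, then by~(\ref{eq:integrability}) the origin lies outside the interior of $\{\varphi_1<+\infty\}$; since $\overline\mu$ has barycenter at the origin and full-dimensional support (Proposition~\ref{prop_1107}), $\varphi_1\equiv+\infty$ on a half-space of positive $\overline\mu$-mass, so both sides of~(\ref{eq_1050}) equal $-\infty$. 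Similarly, if $\int\varphi_1\,d\overline\mu=+\infty$ then the right-hand side is $-\infty$ (recall $\int\varphi_0\,d\overline\mu$ is finite by Proposition~\ref{prop_1114}). Hence I may assume $0<\int e^{-\psi_1}<+\infty$ and that both $\int\varphi_i\,d\overline\mu$ are finite. Since $\psi_1$ and its lower semi-continuous convex envelope $\psi_1^{\ast\ast}$ agree off a Lebesgue-null set, I may replace $\psi_1$ by $\psi_1^{\ast\ast}$ and assume $\psi_1=\varphi_1^\ast$. Write $Z_i=\int e^{-\psi_i}$, $\nu_i=Z_i^{-1}e^{-\psi_i}dx$, and let $T=\nabla\Phi$ be the Brenier map pushing $\nu_0$ onto $\nu_1$; in the Aleksandrov sense, the Monge--Ampère identity
\[
 \psi_1(T(x))-\psi_0(x)\;=\;\log Z_0-\log Z_1+\log\det DT(x)
\]
then holds $\nu_0$-almost everywhere, with $DT=D^2\Phi$ symmetric positive semi-definite.

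With this setup, I combine four ingredients: (i)~the identity $\varphi_0(\nabla\psi_0(x))=x\cdot\nabla\psi_0(x)-\psi_0(x)$, valid $\nu_0$-a.e.\ by~(\ref{eq_2156}); (ii)~Fenchel--Young, $\varphi_1(\nabla\psi_0(x))\ge\nabla\psi_0(x)\cdot T(x)-\psi_1(T(x))$; (iii)~the matrix inequality $\log\det DT\le\mathrm{tr}(DT)-n$, valid $\nu_0$-a.e.; and (iv)~the integration by parts
\[
 \int \nabla\psi_0(x)\cdot\bigl(T(x)-x\bigr)\,d\nu_0(x)
 \;=\;\int\bigl[\mathrm{tr}(DT(x))-n\bigr]\,d\nu_0(x),
\]
discussed in the last paragraph. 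Since $\overline\mu=(\nabla\psi_0)_\ast\nu_0$, ingredients (i) and (ii) bound the integrand of $\int(\varphi_0-\varphi_1)\,d\overline\mu$ pointwise by $x\cdot\nabla\psi_0-\psi_0-\nabla\psi_0\cdot T+\psi_1\circ T$, which gives
\[
 \int(\varphi_0-\varphi_1)\,d\overline\mu
 \;\le\;\int\nabla\psi_0\cdot(x-T)\,d\nu_0
 \;+\;\int(\psi_1\circ T-\psi_0)\,d\nu_0.
\]
Substituting the Monge--Ampère identity in the second term and (iv) in the first, the right-hand side becomes
\[
 \log Z_0-\log Z_1 \;+\;\int\bigl[\log\det DT-\mathrm{tr}(DT)+n\bigr]\,d\nu_0,
\]
whose last integrand is $\le 0$ pointwise by~(iii); this yields~(\ref{eq_1050}).

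The technical heart, and the step I expect to be the main obstacle, is the integration by parts~(iv). Formally one computes
\[
 \mathrm{div}\bigl((T(x)-x)\,e^{-\psi_0(x)}\bigr)
 \;=\;\bigl[\mathrm{tr}(DT(x))-n-(T(x)-x)\cdot\nabla\psi_0(x)\bigr]\,e^{-\psi_0(x)},
\]
and it suffices to show that this divergence integrates to zero on $\R^n$. Mimicking Lemma~\ref{lem_1126}, I would exhaust the interior of $\{\psi_0<+\infty\}$ by an increasing family of smooth compact convex sets $K$ and apply the classical divergence theorem on each, leaving a boundary flux $\int_{\partial K}(T-x)\cdot\nu_K\,e^{-\psi_0}\,d\mathcal{H}^{n-1}$ to control. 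Essential-continuity of $\psi_0$ is precisely what ensures that $e^{-\psi_0}$ vanishes $\mathcal{H}^{n-1}$-a.e.\ on $\partial\{\psi_0<+\infty\}$ (the only possible locus of discontinuity), so the boundary flux tends to zero as $K$ exhausts the interior; tail contributions when $\{\psi_0<+\infty\}$ is unbounded are handled via the exponential decay forced by~(\ref{eq_1726}) together with $T\in L^1(\nu_0)$ (because $\nu_1$ has finite first moments, since $\psi_1$ grows at least linearly at infinity). The required integrabilities of $\mathrm{tr}(DT)$ and $\log\det DT$ against $\nu_0$ follow a posteriori from the Monge--Ampère identity combined with $\psi_1\in L^1(\nu_1)$ (cf.~(\ref{eq_1733})) and the pointwise trace bound. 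Once~(iv) is secured, the calculation of the previous paragraph closes the proof.
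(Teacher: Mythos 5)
Your overall architecture --- Brenier transport, the Monge--Amp\`ere identity $\mu_{\psi_0}$-a.e., the pointwise estimate $\log\det\le\mathrm{tr}-n$, and an integration by parts against $e^{-\psi_0}$ whose boundary terms are tamed by essential-continuity --- is the same as the paper's. But the way you propose to execute the integration by parts has two genuine defects.

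\textbf{Equality~(iv) is false in general.} Since $T=\nabla\Phi$ is the gradient of a convex function, its distributional divergence $\Delta_{\mathrm{dist}}\Phi$ is a nonnegative \emph{measure}, and the Aleksandrov trace $\mathrm{tr}(DT)$ is only the density of its absolutely continuous part. Even on a domain where the flux terms vanish, the divergence theorem yields
\[
\int (T-x)\cdot\nabla\psi_0\, d\mu_{\psi_0}
\;=\; \int e^{-\psi_0}\,d(\Delta_{\mathrm{dist}}\Phi)\,-\,n\!\int e^{-\psi_0}
\;\ge\; \int \big[\mathrm{tr}(DT)-n\big]\,d\mu_{\psi_0},
\]
an inequality, not an equality, and the excess is the singular part of the Hessian. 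Fortunately this is exactly the direction you need (plug into your display to get the right sign), so you should claim the one-sided inequality only; this is what the paper's Lemma~\ref{lem_1D} delivers, slice by slice.

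\textbf{You have dropped the truncation device, and that is where the argument really breaks.} The paper first truncates the target to $e^{-\psi_1}\mathbf{1}_L$ for a large ball $L$ and arranges the Brenier potential $g$ to equal $+\infty$ off $L$, so that $f=g^*$ is \emph{globally Lipschitz} with constant $\mathrm{rad}(L)$; hence $T=\nabla f$ is \emph{bounded}. This is what makes the boundary flux and the one-dimensional Lebesgue--Stieltjes integration by parts work: boundedness of $T$ is used directly in Lemma~\ref{lem_1D} (``$V$ is bounded, hence integrable w.r.t.\ $d\rho$''), and boundedness of $T-x$ on $\partial K$ is what lets essential-continuity kill the surface term. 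Without truncation, $T$ is merely in $L^1(\nu_0)$; $L^1$ control of $T$ on the bulk says nothing about the trace of $|T-x|e^{-\psi_0}$ on $\partial K$, so your claim that the flux $\int_{\partial K}(T-x)\cdot\nu_K\,e^{-\psi_0}\,d\mathcal{H}^{n-1}$ tends to zero is unsupported. You would need either the truncation (followed by letting $L\uparrow\RR^n$ at the end, as in the paper), or a genuinely different uniform bound on $T$ near $\partial\{\psi_0<+\infty\}$ and at infinity, which you do not have. Once the truncation is reinstated and~(iv) is downgraded to the correct inequality, your proof essentially becomes the paper's proof.
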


Regarding the interpretation of (\ref{eq_1050}):
Since $\psi_1$ is not identically $+\infty$
then the function  $\vphi_1$ is bounded from below by an affine function, which is $\overline\mu$-integrable.
Proposition~\ref{prop_1114} asserts that $\vphi_0$ is $\overline\mu$-integrable.
Consequently, the right-hand side of (\ref{eq_1050}) is in $\RR \cup \{-\infty\}$.
Since $e^{-\psi_0}$ has a finite, non-zero integral, then the left-hand
side of (\ref{eq_1050}) is in $\RR \cup \{ \pm \infty \}$, and the inequality always makes sense.

\medskip
The proof of Theorem \ref{thm_prekopa} uses monotone transportation of measure,
which has been for a long time a standard approach to Brunn-Minkowski type inequalities, such as the Pr\'ekopa
inequality (the modern story starts with McCann's work~\cite{mccann}). More specifically, we will exploit here the  differential point view  used in~\cite{cordero} to prove logarithmic-Sobolev inequalities.

\medskip
Let $f$ be a convex function in $\RR^n$. Recall that $f$ is differentiable $\cH^n$-almost everywhere in $\{ f< +\infty \}$.
According to the Aleksandrov theorem (see, e.g., Evans and Gariepy \cite[Section 6.4]{EG}), the convex function $f$ admits a Taylor
expansion of order two
$$f(x_0+h) = f(x) + \nabla f(x_0) \cdot h + \frac12 H_{x_0} h \cdot h + o(|h|^2)$$
at $\cH^n$-almost every $x_0$  in the set $\{ f< + \infty \}$.
Here $H_{x_0}$ is a positive semi-definite $n \times n$ matrix.
This second order term coincides also ($\cH^n$-almost-everywhere) with the derivative of the set valued map $\partial f$ which is defined, $\cH^n$-almost everywhere, at points $x_0$ where $\nabla f(x_0)$ exists, by the property that
$$\lim_{h\to 0}\sup_{y \in \partial f(x_0+h)} \frac{\left| y -  \nabla f(x_0) - H_{x_0} h \right|}{|h|} =0.
$$
We may therefore speak
of the Hessian matrix $D^2 f(x)=H_x$, defined
$\cH^n$-almost everywhere in $\{ f< +\infty \}$.
Whenever we mention second derivatives of convex functions,
we refer to this ``second derivative in the sense of Aleksandrov''.

\medskip
We start with a technical one-dimensional lemma that will be used to justify the integration by parts.

\begin{lemma} Let $\rho = \exp(-\psi)$ be an integrable, continuous, log-concave function on $\RR$.
Set $d \mu_{\psi} = \rho(x) \, dx$. Let $f: \RR \rightarrow \RR$ be a Lipschitz, convex function. Then,
$$ \int_{-\infty}^{\infty} \left( f^{\prime \prime} - f^{\prime} \psi^{\prime} \right) d \mu_{\psi} \leq 0. $$
\label{lem_1D}
\end{lemma}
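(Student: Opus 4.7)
The plan is a careful Lebesgue--Stieltjes integration by parts on the set $I=\{\psi<+\infty\}$. First I will observe that $I=(a,b)$ is an open interval, possibly with $a=-\infty$ or $b=+\infty$, because $\psi$ is convex and $\rho=e^{-\psi}$ is a non-degenerate integrable log-concave function. The continuity of $\rho$ on $\R$ forces $\rho(x)\to 0$ as $x$ approaches any \emph{finite} endpoint of $I$ (since $\psi=+\infty$ there), while integrability of a log-concave function gives the same at any infinite endpoint. Also, since $\psi$ is a finite convex function on $I$, it is locally Lipschitz there, hence so is $\rho$, with $\rho'=-\psi'\rho$ almost everywhere; moreover $\rho'\in L^1(\R)$ by Lemma~\ref{lem_1439}.

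Next, for any compact subinterval $[\alpha,\beta]\subset I$, the function $f'$ is bounded and non-decreasing (so of bounded variation), while $\rho$ is absolutely continuous on $[\alpha,\beta]$. The Riemann--Stieltjes integration-by-parts formula (derivable directly by Fubini) then yields
$$ \int_{(\alpha,\beta]} \rho(x)\,d(f')(x) \;=\; f'(\beta)\rho(\beta) - f'(\alpha)\rho(\alpha) - \int_\alpha^\beta f'(x)\rho'(x)\, dx. $$
Because $f$ is convex, $d(f')$ is a non-negative Borel measure, and its Lebesgue decomposition $d(f')=f''(x)\,dx+d\sigma$ has $\sigma\ge 0$, where $f''$ is the Aleksandrov (a.e.) second derivative, equal to the density of the absolutely continuous part. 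Hence $\int_\alpha^\beta f''\rho\,dx \le \int_{(\alpha,\beta]}\rho\,d(f')$. Substituting $\rho'=-\psi'\rho$ in the identity above gives
$$ \int_\alpha^\beta \bigl(f''(x) - f'(x)\psi'(x)\bigr)\rho(x)\, dx \;\le\; f'(\beta)\rho(\beta) - f'(\alpha)\rho(\alpha). $$

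Finally, I will let $\alpha\to a^+$ and $\beta\to b^-$. The right-hand side tends to $0$ because $\rho$ vanishes at both endpoints and $|f'|\le L$, with $L$ the Lipschitz constant of $f$. On the left-hand side, $f''\rho\ge 0$ so the first term converges by monotone convergence to $\int_I f''\rho\,dx$, while the second converges by dominated convergence, using $|f'\psi'\rho|\le L|\rho'|$ a.e.\ together with $\rho'\in L^1(\R)$. This delivers $\int_\R(f''-f'\psi')\,d\mu_\psi\le 0$.

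The main delicate point is the integration by parts itself, in particular pairing the (possibly singular) positive measure $d(f')$ with the absolutely continuous but non-smooth weight $\rho$; the non-negativity of the singular part of $d(f')$ is precisely what turns the equality one would get in the smooth case into the stated inequality. Everything else amounts to standard convergence theorems, together with the boundary decay of $\rho$ guaranteed by continuity and log-concavity.
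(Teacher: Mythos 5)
Your proof is correct and follows essentially the same route as the paper: Lebesgue--Stieltjes integration by parts pairing the absolutely continuous weight $\rho$ against the non-negative measure $d(f')$, using that the singular part of $d(f')$ is non-negative to turn the equality into the stated inequality, then taking limits to the boundary where $\rho$ vanishes. (One small phrasing note: the openness of $I=\{\psi<+\infty\}$ comes from the \emph{continuity} hypothesis on $\rho$, not merely from convexity and integrability --- you in fact use continuity in the very next sentence, so the argument is fine, but the stated ``because'' is slightly off.)
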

Of course, under stronger smoothness assumptions, the inequality above is an equality. But the inequality, which holds without further assumptions, is sufficient for our purposes.

\begin{proof} Denote $V = f^{\prime}$ the right-derivative of the convex function $f$. It is a bounded, non-decreasing, right-continuous  function on $\RR$ (and continuous except maybe on a countable number of points). It has a derivative $V'$ almost everywhere which coincides almost everywhere with $f''$, the Aleksandrov second derivative. (Here ``Aleksandrov reduces to Lebesgue''). Since $\rho$ is non-negative and bounded,
then we may bound from below the Lebesgue-Stieltjes integral: For any $[a,b] \subset \RR$,
$$ \int_a^b \rho \, d V \geq \int_a^b \rho(x) V^{\prime}(x) \,  dx.  $$
Indeed,  $V^{\prime}(x) \,dx$ is the absolutely continuous part of the Lebesgue-Stieltjes measure $d V$, see e.g., Stein and Shakarchi \cite[Section 6.3.3]{SK}.
So for any interval $[a,b] \subset \RR$, we have
\begin{equation}  \int_{a}^{b} f^{\prime \prime} e^{-\psi} = \int_{a}^{b}
\rho(x) V^{\prime}(x) \,  dx \leq \int_a^b \rho\, d V = -\int_a^b V \, d \rho + \rho(b)V(b-0) - \rho(a) V(a+0),
\label{eq_1117}
\end{equation}
where we are allowed to use the integration by parts formula for the Lebesgue-Stieltjes integral since
$\rho$ is continuous and of bounded variation.
The function $V$ is bounded, and hence  integrable with respect to $d \rho$.
The function $\rho$ is absolutely-continuous and it vanishes at infinity, hence we may take the limit in (\ref{eq_1117}) and conclude that
\begin{equation*} \int_{-\infty}^{\infty} f^{\prime \prime}(x) e^{-\psi(x)} \, dx\leq -\int_{-\infty}^{\infty} V \, d \rho = -\int_{-\infty}^{\infty} f^{\prime}(x) \rho^{\prime}(x) \, dx
= \int_{-\infty}^{\infty} f^{\prime}(x) \psi^{\prime}(x) e^{-\psi(x)} \, dx.
\tag*{\qedhere}
\end{equation*}
\end{proof}

\begin{proof}[Proof of Theorem \ref{thm_prekopa}.]
We may assume that $\int \exp(-\psi_1) > 0$ as otherwise there is nothing to prove.
The convex function $\psi_1$ is bounded from below by an affine function, and therefore
the function $e^{-\psi_1}$ is integrable on compact sets. Let $L \subset \RR^n$ be a large, open ball, centered at the origin, with
$$ \int_L e^{-\psi_1}= \int_{\R^n}  1_L(x)\, e^{-\psi_1(x)}\, dx > 0, $$
where $1_L$ denotes the indicator function of $L$.
Introduce the
Brenier map $S(x)=\nabla g(x)$ between the normalized densities $e^{-\psi_1}\, 1_L$ and $e^{-\psi_0}$,
where  $g$ is a convex function on $\RR^n$. This means that the map $S$ pushes forward the probability measure
on $\RR^n$ whose density is proportional to $e^{-\psi_1}\, 1_L$, to the probability measure
whose density is proportional to $e^{-\psi_0}$.

\medskip Recall that the Brenier map $S = \nabla g$ is uniquely determined almost-everywhere in the support of the measure $e^{-\psi_1(x)} 1_L(x) \,dx$
(see~\cite{mccann95}), but we still have the freedom to modify $g$ outside the support, as long as the resulting function remains convex. We may therefore stipulate that $g(x) = +\infty$ for $x \not \in L$. Denote $f = g^*$. The convex function $f$ is Lipschitz on the entire $\RR^n$; in fact, its Lipschitz constant is at most the radius of $L$. The map
$$T(x) := \nabla f(x)$$ is the
inverse to $S$, and hence it is the Brenier map between the normalized densities $e^{-\psi_0}$ and $e^{-\psi_1}\, 1_L$.

\medskip By the simple but useful weak-regularity theory of McCann \cite{mccann} (which relies on standard measure-theoretic arguments of Lebesgue type), we have,
for $\mu_{\psi_0}$-almost any $y$,
\begin{equation} \frac{e^{-\psi_0(y)}}{\int_{\RR^n} e^{-\psi_0}}=  \frac{e^{-\psi_1(T(y))}}{\int_{L} e^{-\psi_1}} \det D^2 f(y). \label{eq_2125_} \end{equation}
Here, $\mu_{\psi_0}$ is the measure on $\RR^n$ whose density is $e^{-\psi_0}$,
and $D^2 f(y)$ stands for the Hessian
of the function $f$ in the sense of Aleksandrov. From (\ref{eq_2125_})  we see that
 $\mu_{\psi_0}$-almost everywhere,
 \begin{align*}  \log\int_{L} e^{-\psi_1} - \log \int_{\RR^n} e^{-\psi_0} & =\psi_0(y) - \psi_1(T(y)) + \log\det(D^2f(y) ) \\ & \le \psi_0(y)  - \psi_1(T(y)) + \Delta f(y) - n,
 \end{align*}
 where we used the inequality $\log(s) \le s-1$ for $s\ge 0$ and the fact that $D^2f(y)$ has real nonnegative eigenvalues. Next we use the convexity of $\psi_0$ and $\psi_1$. According to (\ref{eq_2156}), for $\mu_{\psi_0}$-almost any point $y$, we have
$$\psi_0(y) +\varphi_0(\nabla \psi_0(y)) =  \nabla \psi_0(y) \cdot y.$$
On the other hand, for such $y$'s we also have, by the definition of the Legendre transform,
$$\psi_1(T(y)) + \varphi_1 (\nabla\psi_0(y)) \ge  \nabla \psi_0(y) \cdot T(y). $$
Consequently, $\mu_{\psi_0}$-almost everywhere in $\RR^n$,
\begin{equation}
 \log\int_{L} e^{-\psi_1} - \log \int_{\RR^n}  e^{-\psi_0}\le (\varphi_1 - \varphi_0)(\nabla\psi_0(y)) -  \nabla \psi_0(y) \cdot (\nabla f(y) - y) + \Delta f(y) - n. \label{eq_2141} \end{equation}

 The knowledgeable reader has probably identified the term $\Delta f -\nabla \psi_0  \cdot \nabla f $ as the Laplacian associated with the measure $\mu_{\psi_0}$, which should integrate to zero with respect to $\mu_{\psi_0}$. However, we need to be cautious on the justification of the integration by parts since we have not imposed any kind of strong regularity.
Let us  fix $i=1,\ldots,n$, and use $x = (y,t) \ \ (y \in \RR^{n-1}, t = x_i \in \RR)$ as coordinates
in $\RR^n$, where $t$ stands for the $i^{th}$ coordinate and $y$ for all of the rest.  First, we know by Fubini that there exists a set $M\subset \R^{n-1}$ with $\cH^{n-1}(M) = 0$ such that for every $y\in \R^{n-1}\setminus H$,  the Aleksandrov Hessian of $f$ at $(y,t)$, and therefore $\partial^{ii} f (y,t)$, exists for almost every $t\in \R$. Since the function $\psi_0$ is essentially-continuous, we can also assume (Lemma~\ref{ess}) that  the integrable, log-concave function $t \to \exp(-\psi_0(y,t))$ is continuous for every $y\in \R^{n-1}\setminus M$.
Using that $\partial^i f$ is bounded, that $\partial^i \psi_0$ is $\mu_{\psi_0}$-integrable and that $\partial^{ii} f$
is non-negative, we have by Fubini's theorem that:
\begin{align*}
\int_{\RR^n} \left[ \partial^{ii} f -\partial^i \psi_0 \partial^i f   \right] \, d \mu_{\psi_0} = \int_{\RR^{n-1}\setminus M}
\int_{-\infty}^{\infty}  \left[ \frac{\partial^2 f(y,t)}{\partial t^2} - \frac{\partial  \psi_0(y,t)}{\partial t} \frac{\partial f(y,t)}{\partial t} \right] d \mu_{\psi_0, y}(t) \, dy,
\end{align*}
where for $y \in \RR^{n-1} \setminus M$, we write $\mu_{\psi_0, y}$ for the measure on $\RR$ whose density is $t \to e^{-\psi_0(y,t)}$.
We implicitly used the fact  that for any fixed $y\in \R^{n-1}\setminus M$, the derivatives $\partial^i f(y,t)$ and $\partial^{ii} f(y,t)$ coincide, for almost every $t\in \R$, with the first derivative and the second (Aleksandrov) derivative of the convex Lipschitz function $t\to f(y,t)$, respectively.
We may thus apply Lemma \ref{lem_1D}, and conclude that the inner integral above is non-positive for every $y\in \R^n\setminus M$.
Summing over $i=1,\ldots,n$, we have the desired integration by parts inequality
$$ \int_{\RR^n} \left[ \Delta f -\nabla \psi_0  \cdot \nabla f  \right] d \mu_{\psi_0} \leq 0. $$
By combining the last inequality with (\ref{eq_2141}) and Lemma \ref{lem_1126}, we get
$$\log\int_{L} e^{-\psi_1} - \log \int_{\RR^n} e^{-\psi_0}  \le  \frac1{\int e^{-\psi_0}} \int_{\RR^n} (\varphi_1 - \varphi_0)(\nabla\psi_0(y)) \, d \mu_{\psi_0}(y)  = \int_{\RR^n} (\vphi_1 - \vphi_0) d \overline{\mu}.  $$
Since $L$ was an arbitrary large Euclidean ball, the conclusion of the theorem follows.
\end{proof}

The end of this section is devoted to a deeper understanding  of essential-continuity.  It will justify, we hope, the relevance of this notion, and it will be used to show that the function $\psi$ we construct in the proof of Theorem~\ref{thm_1459} will necessarily be essentially-continuous.

 Is the essential-continuity of $\psi_0$ really essential in Theorem \ref{thm_prekopa}? The answer is affirmative,
as the following proposition asserts:

\begin{prop}
Let $\psi:\RR^n \rightarrow \RR \cup \{ + \infty \}$ be  a lower semi-continuous convex function with $\int e^{-\psi} = 1$ and let $\mu$ be the associated moment measure. Denote $\varphi=\psi^\ast$
and assume that

\begin{enumerate}
\item[(*)] For any $\mu$-integrable convex function $\vphi_1: \RR^n \rightarrow \RR \cup \{ + \infty \}$, $\psi_1 = \vphi_1^*$,
\begin{equation}  \log \int_{\RR^n} e^{-\psi} - \log \int_{\RR^n} e^{-\psi_1} \geq \int_{\RR^n} \left( \vphi - \vphi_1 \right) d \mu.
\label{eq_1446}
\end{equation}
\end{enumerate}
Then $\psi$ is essentially-continuous.
\label{prop_1405}
\end{prop}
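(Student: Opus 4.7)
The plan is to argue by contradiction. Suppose $\psi$ is lower semi-continuous and convex with $\int e^{-\psi}=1$, that $(\ref{eq_1446})$ holds for every $\mu$-integrable convex $\varphi_1$, yet $\psi$ is \emph{not} essentially-continuous, so the set $B := \partial K\cap \{\psi<+\infty\}$, with $K:=\overline{\{\psi<+\infty\}}$, satisfies $\mathcal{H}^{n-1}(B)>0$. I will exhibit a convex $\mu$-integrable $\varphi_1$ violating $(\ref{eq_1446})$.

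Take the one-parameter family $\varphi_\epsilon(y) := \varphi(y)+\epsilon|y|$ for small $\epsilon>0$. It is convex, and $\mu$-integrable by Lemma~\ref{lem_1439}, which yields $\int|y|\,d\mu = \int|\nabla\psi(x)|\,e^{-\psi(x)}\,dx = \int|\nabla e^{-\psi}|\,dx<+\infty$. The duality between sum and inf-convolution for the Legendre transform gives the explicit formula
$$\psi_\epsilon(x) \;:=\; \varphi_\epsilon^*(x)\;=\;\inf_{|t|\le\epsilon}\psi(x-t),$$
so $\{\psi_\epsilon<+\infty\}=K+\bar B(0,\epsilon)$, which strictly enlarges $K$. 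Applying $(\ref{eq_1446})$ to $\varphi_1=\varphi_\epsilon$, and using $\int e^{-\psi}=1$ together with $\int(\varphi-\varphi_\epsilon)\,d\mu = -\epsilon\int|y|\,d\mu$, reduces it to $\log\int e^{-\psi_\epsilon}\,dx \le \epsilon \int |y|\,d\mu$.

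I would contradict this through a first-order expansion in $\epsilon$. Split $\int e^{-\psi_\epsilon}=\int_K e^{-\psi_\epsilon}+\int_{S_\epsilon}e^{-\psi_\epsilon}$ with $S_\epsilon := (K+\bar B(0,\epsilon))\setminus K$. On $K$, the envelope theorem applied to the concave supremum $\psi_\epsilon(x)=\sup_y[xy-\varphi(y)-\epsilon|y|]$ gives $\psi_\epsilon(x)=\psi(x)-\epsilon|\nabla\psi(x)|+o(\epsilon)$ at points of differentiability of $\psi$ in $A=\mathrm{int}(K)$; dominated convergence then yields $\int_K e^{-\psi_\epsilon} = 1 + \epsilon\int|y|\,d\mu + o(\epsilon)$. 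On $S_\epsilon$, the classical (Steiner) tube disintegration of Lebesgue measure as $\epsilon\,d\mathcal{H}^{n-1}$ over $\partial K$, combined with $\psi_\epsilon(x)\to\psi(\pi_K(x))$ as $\epsilon\to 0$ (where $\pi_K$ is the nearest-point projection onto $K$), yields a contribution from $\pi_K^{-1}(B)\cap S_\epsilon$ equal to $\epsilon\int_B e^{-\psi}\,d\mathcal{H}^{n-1}+o(\epsilon)$, \emph{strictly positive} thanks to $\mathcal{H}^{n-1}(B)>0$ and $e^{-\psi}>0$ on $B$. On the complementary portion $\pi_K^{-1}(\partial K\setminus B)\cap S_\epsilon$, $\psi=+\infty$ combined with lower semi-continuity forces $\psi_\epsilon(x)\to+\infty$ uniformly, so that piece contributes only $o(\epsilon)$. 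Summing,
$$\log\int e^{-\psi_\epsilon}\;=\;\epsilon\int|y|\,d\mu\;+\;\epsilon\int_B e^{-\psi}\,d\mathcal{H}^{n-1}+o(\epsilon),$$
which for sufficiently small $\epsilon>0$ strictly exceeds $\epsilon\int|y|\,d\mu$, the desired contradiction.

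The principal technical obstacle is the rigorous treatment of the boundary asymptotic: the Steiner-type disintegration of $\mathrm{Leb}|_{S_\epsilon}$ over $\partial K$ (standard for convex bodies but requiring care at non-smooth boundary points), and a uniform-in-$\epsilon$ decay estimate of $e^{-\psi_\epsilon}$ on $\pi_K^{-1}(\partial K\setminus B)\cap S_\epsilon$ that ensures this piece contributes only $o(\epsilon)$; both rest on the interplay between the lower semi-continuity and convexity of $\psi$ and the explicit inf-convolution formula for $\psi_\epsilon$.
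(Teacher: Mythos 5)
Your proposal takes a genuinely different route from the paper, and the overall structure of the argument is sound, but two details deserve comment: one where you have more work than you realize you need, and one where you have less.

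The paper works \emph{locally}: it fixes a boundary point $x_0$ where $\psi$ is finite, finds a small piece $U_0\subset\partial K$ on which the boundary is a Lipschitz graph in a direction $\theta$, and perturbs $\varphi$ by $\varphi_\eps=\max\{\varphi,\varphi+\eps(y\cdot\theta)-1\}$. The truncation ``$-1$'' ensures the perturbation only acts where $y\cdot\theta>1/\eps$, which makes the right side of \eqref{eq_1446} merely $o(\eps)$ (with no first-moment computation required), while an elementary pointwise bound $\psi_\eps\le\min\{\psi,\psi(\cdot-\eps\theta)+1\}$ produces a positive contribution of order $\eps\,\cH^{n-1}(A^{k_0})$ from the thin slab on the outside of $U_0$. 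It needs no envelope theorem and no Steiner formula. Your version is global: the additive perturbation $\varphi_\eps=\varphi+\eps|y|$ turns $\psi_\eps$ into the inf-convolution $\inf_{|t|\le\eps}\psi(\cdot-t)$, the right side of \eqref{eq_1446} becomes \emph{exactly} $\eps\int|y|\,d\mu$, and you want to recover that term on the left side from the interior of $K$ and then beat it by the boundary contribution over $B$. This is a cleaner conceptual picture (in effect you are computing the full first variation of $\int e^{-\psi}$ under inf-convolution with small balls, which is the total variation of $e^{-\psi}$ including its jump on $\partial K$), but it pays for that elegance with two external ingredients: Aleksandrov a.e.\ differentiability plus an envelope-theorem limit for the interior, and the Steiner-type foliation $x=x_0+s\nu(x_0)$ with Jacobian $\ge 1$ for the shell.

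Two corrections to the details. First, you do not need (and should not claim) the \emph{equality} $\int_K e^{-\psi_\eps}=1+\eps\int|y|\,d\mu+o(\eps)$ by dominated convergence; the natural majorant $e^{-\psi}(e^{\eps|\nabla\psi|}-1)/\eps\le e^{-\psi}(e^{|\nabla\psi|}-1)$ is not integrable in general, so the upper bound on the interior piece is genuinely problematic. Fortunately you only need the lower bound, which Fatou gives for free since $(e^{-\psi_\eps}-e^{-\psi})/\eps\ge 0$ and converges pointwise a.e.\ to $|\nabla\psi|e^{-\psi}$. Second, the portion of $S_\eps$ projecting onto $\partial K\setminus B$ is not a ``principal technical obstacle'': for the lower bound you may simply discard it. You need only the inequalities
\[
\int_{S_\eps}e^{-\psi_\eps}\ \ge\ \int_{\partial K}\!\!\int_0^\eps e^{-\psi_\eps(x_0+s\nu(x_0))}\,ds\,d\cH^{n-1}(x_0)\ \ge\ \eps\int_B e^{-\psi}\,d\cH^{n-1},
\]
using that the outward normal foliation of a convex body has Jacobian $\ge 1$ ($\cH^{n-1}$-a.e.) and that $\psi_\eps(x_0+s\nu)\le\psi(x_0)$ for $0<s\le\eps$. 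Combining these with $\int e^{-\psi_\eps}\le e^{\eps\int|y|\,d\mu}$ from \eqref{eq_1446} yields $\int|y|\,d\mu+\int_B e^{-\psi}\,d\cH^{n-1}\le\liminf_\eps\frac{1}{\eps}(\int e^{-\psi_\eps}-1)\le\int|y|\,d\mu$, the desired contradiction, provided one also notes that $\int_B e^{-\psi}\,d\cH^{n-1}>0$ whenever $\cH^{n-1}(B)>0$ (since $e^{-\psi}>0$ pointwise on $B$). With these adjustments your proof is complete; it relies on more machinery than the paper's local argument, but in exchange it exposes the underlying total-variation structure quite transparently.
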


We remark that the function $\vphi$ in Proposition \ref{prop_1405} is $\mu$-integrable, according to Proposition~\ref{prop_1114}, and hence the right-hand side of (\ref{eq_1446}) is well-defined.
The assumption that $\mu$ is the moment measure of $\psi$  is somewhat redundant;
indeed, property~\eqref{eq_1446} forces $\mu$ to be the moment measure of $\psi$, as we will see in the next section.


\begin{proof}[Proof of  Proposition \ref{prop_1405}.]
Denote
$$K=\{\psi<+\infty\} \quad \textrm{and}\quad S=\partial K.$$
The set $K$ is convex with a non-empty interior. Lower semi-continuity ensures the continuity of $\psi$ at points $x$ where $\psi(x) = +\infty$, so what remains to prove is that $\cH^{n-1} (A)= 0$ where
$$ A:=\{x \in S \; ; \ \psi(x) <+\infty\} .$$
Moreover, in order to prove that $\cH^{n-1} (A) = 0$, it is enough to prove that for every $x\in A$ there exists $U(x)\subset S$, an open neighborhood in $S$ of $x$, such that $\cH^{n-1} (U(x)\cap A) = 0$.
So let us fix a point $x_0 \in A$. The boundary $S = \partial K$ is locally the graph of a convex function in an appropriate coordinate system.
This means that there exists an open  neighborhood $U_0\subset S$ of $x_0$ in $S$, a direction $\theta\in \R^n$, $|\theta|=1$ and $c_0 > 0$ such that
\begin{equation}\label{propsetU}
\forall x \in U_0,\  \forall t >0, \qquad x +t \theta \notin \overline{K}
\end{equation}
where $\overline{K}$ is the closure of $K$, and also
\begin{equation}
\forall x,y \in U_0, \quad \quad \quad  \left| Proj_{\theta^\perp}(x) - Proj_{\theta^\perp}(y) \right| \geq c_0 |x-y|.  \label{eq_1204}
\end{equation}
Here,
$Proj_{\theta^\perp}(x) = x - (x \cdot \theta) \theta$ is the orthogonal projection operator
onto the hyperplane orthogonal to $\theta$. The reader may verify that (\ref{eq_1204}) is equivalent to the fact that
the convex function whose graph is the subset $U_0 \subset S$, is locally Lipschitz.

\medskip
We want to prove that $\cH^{n-1}(U_0\cap A)=0$. The increasing sequence  of sets $A^k$, defined by
$A^k =  \{x \in U_0\; ; \ \psi(x) \le k\}$, tends to  $ U_0 \cap A$ as $k\to +\infty$. So we are done  if we can prove that, for  any $k\ge 1$, we have $\cH^{n-1}(A^k) = 0$. In the sequel we fix some $k_0\ge 1$.

\medskip
 For a small $ \eps >0$ let us introduce
 \begin{equation}
 \vphi_\eps(y) = \max \left \{ \vphi(y), \vphi(y) + \eps (y \cdot \theta) - 1 \right \}
 \quad \quad \quad \quad (y \in \RR^n).
 \label{eq_1625}
 \end{equation}
 The function $\vphi_{\eps}$ is convex, as it is the maximum of two convex functions. It is  also $\mu$-integrable, since $\vphi$ is
$\mu$-integrable (from Proposition \ref{prop_1114}) as well as all linear
functions (from Lemma \ref{lem_1439}), and the maximum of two integrable functions is integrable itself.
Denoting $\psi_{\eps} = \vphi_{\eps}^*$,
the assumption~(\ref{eq_1446}) with $\varphi_1=\varphi_\eps$
rewrites as
\begin{equation}\label{assumptioneps}
\log \int_{\RR^n} e^{-\psi_\eps} - \log \int_{\RR^n} e^{-\psi} \leq \int_{\RR^n} \left( \vphi_\eps - \vphi \right) d \mu.
\end{equation}
 We are going to examine the first order of each side of this inequality as $\eps\to 0$.
To treat the right-hand side, we define $B_\eps = \{y \in \R^n \; ; \ |y|\le 1 / \eps\}$, for which we have
\begin{equation}
 \eps (y \cdot \theta) - 1 \leq 0 \quad \quad \quad \quad \text{for} \ y \in B_\eps .\label{eq_1655} \end{equation}
 We find that
\begin{align} \nonumber
\int_{\RR^n} \big(\vphi_{\eps}  - \vphi \big)\, d \mu  = \int_{\RR^n} \max \left \{0, \eps (y \cdot \theta) - 1 \right \} d \mu(y)
& = \int_{\RR^n \setminus B_\eps} \max \left \{0, \eps (y \cdot \theta) - 1 \right \} d \mu(y)\\ & \leq \eps\, \int_{\RR^n \setminus B_\eps} |y|\, d\mu(y) \label{eq_1743}
 \end{align}
where we used~\eqref{eq_1655} and the trivial bound $ \max \left \{0, \eps (y \cdot \theta) - 1 \right \} \le \eps \, |y|$. Note that Lemma~\ref{lem_1439} and the definition of the image measure guarantee  that
$$\int_{\RR^n} |y|\, d\mu(y) = \int_{\RR^n} |\nabla \psi|\, e^{-\psi} < +\infty.$$
Therefore, $\int_{\RR^n \setminus B_\eps} |y|\, d\mu(y) \rightarrow 0$ as $\eps \rightarrow 0$.
The bound (\ref{eq_1743}) implies that when $\eps \to 0$ we have
\begin{equation}
 \label{eq_1702}
 \int_{\RR^n} \big(\vphi_{\eps}  - \vphi \big)\, d \mu  \le o(\eps).
\end{equation}
Recall that the Legendre transform reverses order. We deduce from (\ref{eq_1625}) that for any $x \in \RR^n$,
\begin{equation}
 \psi_{\eps}(x) \leq \min \left \{ \psi(x), \psi(x - \eps \theta) + 1 \right \} \le \psi(x),
 \label{eq_1627} \end{equation}
 and so we have,  on the set $A^{k_0} =  \{x \in U_0\; ; \ \psi(x) \le k_0\}$, that
$$\forall x \in A^{k_0}, \qquad   \psi_{\eps}(x) \leq k_0 \quad \quad \text{and} \quad \quad \psi_{\eps}(x + \eps \theta) \leq k_0 + 1. $$
Since $\psi_{\eps}$ is convex, then
\begin{equation}
\forall x \in A^{k_0}, \ \forall t \in [0, \eps],\qquad
\psi_{\eps}(x + t \theta) \leq k_0+1. \label{eq_1508} \end{equation}
Define $A^{k_0}_{\eps} = \left \{ x + t \theta \, ; \, x \in A^{k_0}, \ 0 < t \leq \eps \right \}$.
Since $A^{k_0} \subset U_0$, then we may apply Fubini's theorem in view of \eqref{propsetU}, and obtain
\begin{equation} \textrm{Vol}_n(A^{k_0}_\eps) = \eps\, \cH^{n-1} \left(Proj_{\theta^\perp} \left( A^{k_0} \right) \right) \geq \eps \cdot c_0^{n-1} \cdot \cH^{n-1}(A^{k_0})
\label{eq_1211} \end{equation}
where we used (\ref{eq_1204}) in the last passage.
Moreover~\eqref{propsetU} and (\ref{eq_1508}) imply that
\begin{equation}
A_{\eps}^{k_0} \cap K = \emptyset \quad \quad \text{and} \quad \quad \sup_{z \in A^{k_0}_{\eps}} \psi_{\eps}(z) \leq k_0+1.  \label{eq_1630}
\end{equation}
We may now use (\ref{eq_1627}), (\ref{eq_1211}) and (\ref{eq_1630}) to compute that
\begin{align*}
\int_{\RR^n} e^{-\psi_{\eps}} & = \int_{K} e^{-\psi_{\eps}} + \int_{\RR^n \setminus K} e^{-\psi_{\eps}}
\geq \int_{K} e^{-\psi} + \int_{A_{\eps}^{k_0}} e^{-\psi_{\eps}} \\ & \geq \int_{\RR^n} e^{-\psi} + \int_{A^{k_0}_{\eps}} e^{-(k_0+1)}
\geq \int_{\RR^n} e^{-\psi} +   \eps \cdot c_0^{n-1} \cdot \cH^{n-1}(A^{k_0}) \cdot e^{-(k_0+1)}. \nonumber
\end{align*}
By using  the elementary bound  $\log (s + t) \geq \log s + t / (2s)$ for $0< t \le s$, we find, for $\eps$ small enough, that
\begin{equation}
\log \int_{\RR^n} e^{-\psi_{\eps}} \geq \log \int_{\RR^n} e^{-\psi} + c_1\,  \eps\,  \cH^{n-1}(A^{k_0})
\label{eq_1703_}
\end{equation}
where we have set $c_1= c_0^{n-1} e^{-(k_0+1)} / (2\int_{\R^n} e^{-\psi}) = c_0^{n-1} e^{-(k_0+1)} / 2 >0$.
If we go back to our assumption~\eqref{assumptioneps}, we see that we have established, in view of ~\eqref{eq_1702} and~\eqref{eq_1703_}, that
$$c_1\,  \eps\, \cH^{n-1}(A^{k_0}) \le o(\eps)$$
as $\eps\to 0$. This ensures that $\cH^{n-1}(A^{k_0}) = 0$, as desired.
\end{proof}

\begin{rem} {\rm
Formally,  Theorem \ref{thm_prekopa} does not allow to recast the general case of  the Pr\'ekopa inequality, since
the latter inequality holds true for all log-concave functions, not only the essentially-continuous ones (of course by approximation we can assume that we work with finite, thus continuous,  convex functions). And we have just shown that essential-continuity is needed in our statement. What is going on?
The explanation could be that in the case of a non essentially-continuous $\psi_0$, the measure $\overline\mu$ from~(\ref{eq_1050})
seems to no longer be  a Borel measure on $\RR^n$, but rather a distribution of a certain kind. In this case, perhaps the action of $\overline\mu$ on the convex function
$\vphi$ depends also on the behavior of $\vphi$ at infinity, i.e., on the function
$$ \theta \to \lim_{t \rightarrow +\infty} \frac{\vphi(t \theta)}{t} \in \RR \cup \{ + \infty \}. $$
defined on the sphere $S^{n-1} = \left \{ \theta \in \RR^n \, ; \, |\theta|=1 \right \}$.}
\end{rem}


\section{Existence and Uniqueness}
\label{sec4}

In this section we give the proof of Theorem~\ref{thm_1459}. The  statement concerning the uniqueness of $\psi$ up to translation relies on the sub-gradient form of Pr\'ekopa's inequality (Theorem~\ref{thm_prekopa}) and on the characterization of equality cases in Pr\'ekopa's inequality. The existence of $\psi$ relies on the study of the variational problem~\eqref{eq_1401} that was put forward by Berman and Berndtsson \cite{BB} and on Proposition \ref{prop_1405}. We propose a a new treatment for the variational problem
that utilizes the geometry of log-concave measures.

\begin{proof}[Proof of the uniqueness part in Theorem \ref{thm_1459}.] Let $\mu$ be a measure on $\RR^n$ satisfying the assumptions of Theorem \ref{thm_1459}.
Let $\psi_0, \psi_1: \RR^n \rightarrow \RR \cup \{ + \infty \}$ be two essentially-continuous convex functions whose moment measure is $\mu$. Our goal is to show that there exists $x_0 \in \RR^n$ such that
\begin{equation}
\psi_1(x) = \psi_0(x-x_0) \quad \quad \quad \quad \text{for all} \ x \in \RR^n. \label{eq_955}
\end{equation}
Since $\psi_0$ and $\psi_1$ have the same moment measure, then $\int e^{-\psi_0} = \int e^{-\psi_1} \in (0, +\infty)$. Adding the same constant to both functions,
we may assume that $\mu$ is a probability measure. Denote $\vphi_i = \psi_i^* \ (i=0,1)$. According to
Proposition~\ref{prop_1114}, the convex functions $\vphi_0$ and $\vphi_1$ are $\mu$-integrable. Denote
 $$ \vphi_{1/2} :=  \frac{\vphi_0 + \vphi_1}{2}, \quad \quad \psi_{1/2} := \vphi_{1/2}^*= \Big(  \frac{\psi_0^\ast + \psi_1^\ast}2\Big)^\ast. $$
 Then $\vphi_{1/2}$ is a $\mu$-integrable convex function. From the remarks
at the end of Section~\ref{sec2}, this function is finite in the interior of $conv(Supp(\mu))$. Consequently, $\vphi_{1/2}$ is bounded from below by some affine function,
and $\psi_{1/2}$ is not identically
$+\infty$. Since $\psi_0$ is essentially-continuous, then we may apply Theorem \ref{thm_prekopa}
and conclude that
\begin{equation} \log \int_{\RR^n} e^{-\psi_0} -\log\int_{\RR^n} e^{-\psi_{1/2}} \ge  \int_{\RR^n} (\varphi_0- \varphi_{1/2})\, d\mu.
\label{eq_1007} \end{equation}
Since $\psi_1$ is essentially-continuous, then we may apply Theorem \ref{thm_prekopa} again as follows:
\begin{equation} \log \int_{\RR^n} e^{-\psi_1} -\log\int_{\RR^n} e^{-\psi_{1/2}} \ge  \int_{\RR^n} (\varphi_1- \varphi_{1/2})\, d\mu.
\label{eq_1008} \end{equation}
The right-hand side of (\ref{eq_1007}) is a finite number, as well as the right-hand side of (\ref{eq_1008}).
The left-hand side of (\ref{eq_1007}) is therefore in $\RR \cup \{ +\infty \}$, as well as the left-hand side of (\ref{eq_1008}).
We now add (\ref{eq_1007}) and (\ref{eq_1008}) and divide by two, to obtain
$$\frac{\log \int_{\RR^n} e^{-\psi_0} + \log \int_{\RR^n} e^{-\psi_1}}{2} - \log\int_{\RR^n} e^{-\psi_{1/2}} \geq 0.
\label{eq_1010}
$$
But from the Pr\'ekopa inequality (\ref{eq_1659}), the converse inequality holds:
$$ \frac{\log \int_{\RR^n} e^{-\psi_0} + \log \int_{\RR^n} e^{-\psi_1}}{2} \leq \log\int_{\RR^n} e^{-\psi_{1/2}}.
$$
Hence $\{u_0^\ast=\psi_0, u_1^\ast=\psi_1, (\frac12 u_0+\frac12 u_1)^\ast= \psi_{1/2}\}$ is  a case of equality in  Pr\'ekopa's inequality~(\ref{eq_1659}).
It was explained by Dubuc \cite{dubuc} (see Theorem 12 there and the discussion afterwards) that equality in (\ref{eq_1659}) holds if and only if there exists $x_0 \in \RR^n$
 and $c \in \RR$ such that
$$
u_1^*(x) = u_0^*(x - x_0) + c
$$
for almost any $x \in \RR^n$. Since $\psi_0 = u_0^*$ and $\psi_1 = u_1^*$ are lower semi-continuous convex functions, then equality almost everywhere implies  equality pointwise in $\RR^n$. Therefore $\psi_0$ is a translation of $\psi_1$,
up to an additive constant. Since $\int e^{-\psi_0} = \int e^{-\psi_1}$, there is no need for
an additive constant, and (\ref{eq_955}) is proven. \end{proof}

The rest of the this section is devoted to the existence part of Theorem \ref{thm_1459}. It relies on the study of the following variational problem.

\begin{prop}  Let $\mu$ be a  probability measure on $\RR^n$ satisfying the requirements of Theorem \ref{thm_1459}.
For a $\mu$-integrable function $f: \RR^n \rightarrow \RR \cup  \{ +\infty \}$ we set
$$ \cI_{\mu}(f) := \log \int_{\RR^n} e^{-f^*} - \int_{\RR^n} f d \mu. $$
Then there exists a $\mu$-integrable, convex function $\vphi: \RR^n \rightarrow \RR \cup \{ + \infty \}$
with $\int e^{-\varphi^\ast} = 1$ such that
\begin{equation}  \cI_{\mu}(\vphi) = \sup_{f} \cI_{\mu}(f)
\label{eq_1401}
\end{equation}
where the supremum runs over all $\mu$-integrable functions $f: \RR^n \rightarrow \RR \cup \{ + \infty \}$.
\label{prop_1756}
\end{prop}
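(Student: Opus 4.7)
The approach is the direct method in the calculus of variations, leveraging the concavity of $\cI_\mu$ observed via Pr\'ekopa in Section~\ref{sec3}. First I reduce to convex lsc $\varphi$: for any $\mu$-integrable $f$ with $\cI_\mu(f)>-\infty$, one has $(f^{**})^*=f^*$ and $f^{**}\le f$, so $\cI_\mu(f^{**})\ge \cI_\mu(f)$; the biconjugate $f^{**}$ is $\mu$-integrable since it is trapped between an affine minorant (integrable because $\mu$ has finite first moments) and $f$. Next, I exploit the $(n+1)$-parameter symmetry $\varphi(y)\mapsto \varphi(y) - v\cdot y + c$ of $\cI_\mu$: the constant cancels since $\mu(\R^n)=1$, and the linear term cancels since $\int y\, d\mu=0$. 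Dually on $\psi=\varphi^*$, this is $\psi\mapsto \psi(\cdot-v)-c$, i.e.\ translation plus additive constant. I use these $n+1$ parameters to fix the gauge by requiring $\int e^{-\varphi^*}=1$ and the probability density $e^{-\varphi^*}$ to be centered at the origin.

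Under this normalization, $\cI_\mu(\varphi)=-\int \varphi\, d\mu$. The key quantitative tool is Lemma~\ref{lem_2211} (announced in Section~\ref{sec2}), which controls a convex $\mu$-integrable function pointwise on compact subsets of the interior of $conv(Supp(\mu))$ in terms of its $\mu$-integral, crucially because the barycenter $0$ lies in that interior. In particular, for some $r>0$ the ball $B_r$ lies there, so $\max_{B_r}\varphi$ is controlled by $\int \varphi\, d\mu$. Dualizing, $\psi=\varphi^*$ admits the uniform cone-type lower bound $\psi(x)\ge r|x|-C$ with $C$ depending only on $\int \varphi\, d\mu$; combined with $\int e^{-\psi}=1$, this forces $\int \varphi\, d\mu$ itself to be uniformly bounded below, and in particular yields $\sup \cI_\mu<+\infty$.

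Now take a maximizing sequence $\varphi_k$ in normalized form; the bound on $\int \varphi_k\, d\mu$ gives, via Lemma~\ref{lem_2211}, uniform pointwise bounds on $\varphi_k$ on every compact subset of the interior of $conv(Supp(\mu))$. By a Blaschke-type selection for convex functions (uniformly pointwise-bounded convex functions on a compact set are equi-Lipschitz, hence equicontinuous, on slightly smaller compacta), a subsequence $\varphi_{k_j}$ converges locally uniformly there to a convex lsc function $\varphi_\infty$. For the passage to the limit: Fatou's lemma applied to $\varphi_{k_j}$ minus their common affine minorant gives $\int \varphi_\infty\, d\mu\le \liminf \int \varphi_{k_j}\, d\mu$; the uniform tail bound $\psi_{k_j}(x)\ge r|x|-C$ provides an integrable dominating function for $e^{-\psi_{k_j}}$, so dominated convergence yields $\int e^{-\psi_\infty}=\lim \int e^{-\psi_{k_j}}=1$. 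Combining, $\cI_\mu(\varphi_\infty)\ge \limsup \cI_\mu(\varphi_{k_j})=\sup \cI_\mu$, so $\varphi_\infty$ is the desired maximizer with $\int e^{-\varphi_\infty^*}=1$.

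The main obstacle throughout is preventing the maximizing sequence from degenerating, either by mass escaping to infinity in $e^{-\psi_k}$ or by the density concentrating onto a lower-dimensional set. The $(n+1)$-parameter gauge fixing together with Lemma~\ref{lem_2211}'s pointwise control are precisely what rule out both scenarios and make the Blaschke selection plus dominated convergence argument work; the fact that $\mu$'s barycenter lies in the interior of $conv(Supp(\mu))$ (rather than on its boundary) is the geometric feature that ultimately drives the whole compactness.
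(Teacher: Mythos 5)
Your overall strategy (gauge-fix, get an a priori bound on a maximizing sequence, pass to the limit by a Blaschke-type compactness) is the same as the paper's, and the compactness and limit-passage parts are in line with Lemma~\ref{lem_1600}. However, there is a genuine gap in the a priori estimate, which is the heart of the matter.

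You invoke Lemma~\ref{lem_2211} to bound $\max_{B_r}\varphi$ by $\int\varphi\,d\mu$ and then claim the cone lower bound $\psi\ge r|\cdot|-C$ with $C$ depending \emph{only} on $\int\varphi\,d\mu$. But Lemma~\ref{lem_2211} is stated for \emph{non-negative} convex functions, and your normalization ($\int e^{-\psi}=1$ and barycenter of $e^{-\psi}$ at the origin) does not make $\varphi$ non-negative. Applying the lemma to $\varphi-\inf\varphi$ yields $\max_{B_r}\varphi-\inf\varphi\le C'\bigl(\int\varphi\,d\mu-\inf\varphi\bigr)$ with $C'\ge1$, so the constant in your cone bound depends on $\inf\varphi$ as well. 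Chasing the inequalities (including Fradelizi's $\varphi(0)-\inf\varphi\le n$, available after centering) leaves you with an estimate of the form $\int\varphi\,d\mu\ge a/C'+(1-1/C')\inf\varphi$, which is vacuous until $\inf\varphi$ is bounded below; and nothing in your argument bounds $\inf\varphi=-\psi(0)$ from below, since a centered log-concave probability density can be arbitrarily spread out (so $\inf\psi$, hence $\psi(0)$, can be arbitrarily large). In other words, Lemma~\ref{lem_2211} plus $\int e^{-\psi}=1$ do not by themselves rule out the maximizing sequence sliding off with $\inf\varphi_k\to-\infty$.

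The paper closes this gap with a separate, quantitatively stronger a priori estimate, namely Lemma~\ref{lem_2116} (which uses the Rogers--Shephard inequality to produce a cone bound with constants depending only on $\mu$) upgraded in Lemma~\ref{lem_1119} by Fradelizi's inequality and the functional Santal\'o inequality (\ref{eq_2125}), yielding $\int\varphi\,d\mu\ge\frac{c_\mu}{2\pi}\bigl(\int e^{-\psi}\bigr)^{1/n}-(n+1)$, an estimate that does \emph{not} involve $\inf\varphi$. It is precisely the Santal\'o inequality that trades the uncontrolled quantity $\int e^{-\varphi}$ for the normalized $\int e^{-\psi}$, and this ingredient is missing from your proposal. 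Once that estimate is in place, your gauge-fixing, Lemma~\ref{lem_2211}, the Rockafellar/Blaschke selection, Fatou, and the reverse Fatou (or monotone approximation) for $\int e^{-\psi}$ indeed finish the proof much as you describe. I would suggest either adopting the paper's normalization $\varphi(0)=\inf\varphi=0$ (which makes $\varphi\ge0$ and lets you call Lemma~\ref{lem_2211} directly) or, if you keep your normalization, explicitly importing the Rogers--Shephard/Santal\'o bound of Lemma~\ref{lem_1119}.
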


Before we prove this proposition, let us see how we can deduce the Theorem from it.

\begin{proof}[Proof of the existence part in Theorem \ref{thm_1459}.] We are given a measure $\mu$ on $\RR^n$ satisfying assumptions (i), (ii) and (iii),
and we need to find an essentially-continuous convex function $\psi$ whose moment measure is $\mu$. We may normalize $\mu$ to be a probability measure -- this
amounts to adding a constant to $\psi$. Apply Proposition~\ref{prop_1756}, and conclude that there exists a $\mu$-integrable
convex function $\vphi: \RR^n \rightarrow \RR \cup \{ + \infty \}$, such that, denoting $\psi = \vphi^*$
we have $\int e^{-\psi} = 1$ and also:
\begin{enumerate}
\item[(*)] For any $\mu$-integrable function $\vphi_1: \RR^n \rightarrow \RR \cup \{ + \infty \}$, denoting $\psi_1 = \vphi_1^*$,
\begin{equation}   \log \int_{\RR^n} e^{-\psi} - \log \int_{\RR^n} e^{-\psi_1} \geq \int_{\RR^n} \left( \vphi - \vphi_1 \right) d \mu.
\label{eq_1758}
\end{equation}
\end{enumerate}
Since $\int e^{-\psi} = 1$, then we may use Jensen's inequality, and conclude that for any lower semi-continuous convex function $\psi_1:\RR^n \rightarrow \RR \cup \{ + \infty \}$,
\begin{equation}  \int_{\RR^n} \left( \psi - \psi_1 \right) e^{-\psi} \leq \log \int_{\RR^n} e^{\psi - \psi_1} e^{-\psi} = \log \int_{\RR^n} e^{-\psi_1}.
\label{eq_1759}
\end{equation}
To be more precise, we need to explain why the left-hand side of (\ref{eq_1759}) makes sense as an element in $\RR \cup \{-\infty \}$.
This is because the convex function $\psi_1$ is bounded from below by an affine function, which is integrable
with respect to $e^{-\psi(x)} \,dx$, while the function $\psi e^{-\psi}$ is integrable, as was already shown in the proof of Proposition \ref{prop_1114}.
Next, use (\ref{eq_1758}), (\ref{eq_1759}) and $\int e^{-\psi} = 1$ to arrive at
\begin{enumerate}
\item[(**)] For any $\mu$-integrable  function $\vphi_1: \RR^n \rightarrow \RR \cup \{ + \infty \}$, denoting $\psi_1 = \vphi_1^*$,
\begin{equation}   \int_{\RR^n} \psi\,  e^{-\psi} + \int_{\RR^n} \vphi \, d \mu \leq \int_{\RR^n} \psi_1\,  e^{-\psi} + \int_{\RR^n} \vphi_1\,  d \mu.
\label{eq_1800}
\end{equation}
\end{enumerate}
However, (**) is precisely the Kantorovich dual-variational problem associated with {\it optimal transportation} between the measures $e^{-\psi(x)} \,dx$ and $\mu$, see Brenier \cite{brenier}
or Gangbo and McCann \cite{gangboo_mccann}.
The inequality (\ref{eq_1800}) implies that $\psi$ is a minimizer of this variational problem, and then,
a standard, elementary, argument
implies that $\nabla \psi$ pushes forward the measure $e^{-\psi(x)} \,dx$
to the measure $\mu$ (this property is formally nothing else than the Euler-Lagrange equation for the non-linear variational problem (**)).

 For readers that are not familiar with the theory of optimal transportation, the standard argument we refer to goes roughly as follows: Pick any continuous, compactly-supported
function $b: \RR^n \rightarrow \RR$. Denoting $\vphi_t = \vphi + t b$ and $\psi_t = \vphi_t^*$, one can check that
$$ \left. \frac{d \psi_t(x)}{dt} \right|_{t = 0} = -b(\nabla \psi(x)) $$
at any point $x \in \RR^n$ in which $\psi$ is differentiable (see, e.g., Berman and Berndtsson \cite[Lemma 2.7]{BB} for a short proof).
From the bounded convergence theorem,
\begin{equation}  \left. \frac{d}{dt} \left( \int_{\RR^n} \psi_t e^{-\psi} + \int_{\RR^n} \vphi_t d \mu \right) \right|_{t =0 } = - \int_{\RR^n} b(\nabla \psi(x))\,  e^{-\psi(x)} \,dx + \int_{\RR^n} b\,  d \mu. \label{eq_1405} \end{equation}
However, the expression in (\ref{eq_1405}) must vanish according to (**). Therefore equation~\eqref{eq:defimage} holds for every continuous compactly-supported function.
 This ensures that $\nabla \psi$ pushes the measure $e^{-\psi(x)} \,dx$ forward to the measure $\mu$.

Thus $\mu$ is the moment measure of $\psi$. Finally, property (*) and the fact that $\mu$ is the moment measure of $\psi$ allow us to apply Proposition \ref{prop_1405} and deduce that $\psi$ is essentially-continuous.
\end{proof}

\begin{rem}{\rm \label{rem_maximizer}
We see that there is a strong connection between the variational problem~\eqref{eq_1401} used to construct $\psi$ and the sub-gradient form of Pr\'ekopa's inequality from Theorem~\ref{thm_prekopa}. This calls for several observations:
\begin{itemize}
\item It follows from Pr\'ekopa's inequality that the functional $f\to \cI_{\mu}(f)$ from Proposition~\ref{prop_1756} is concave (and strictly concave modulo addition of affine maps) on the convex set of convex function $f$ that are finite in a neighborhood of $0$. It is therefore not surprising that it admits a maximum.
\item
With the notation of Proposition~\ref{prop_1756}, if we know that $\mu$ is the moment measure of $\psi_0$, then Theorem~\ref{thm_prekopa} forces $\varphi_0=\psi_0^\ast$ to be a maximizer in~\eqref{eq_1401}.
\end{itemize}}
\end{rem}

It remains to prove Proposition~\ref{prop_1756}. This requires several steps that are detailed in the next Lemmas. The proof of the Proposition is given at the end of this section, once these lemmas are established. Recall that we denote $S^{n-1} = \{\theta\in \R^n\; ;\  |\theta| = 1\}$.

\begin{lemma} Let $\mu$ be a probability measure on $\RR^n$ satisfying the assumptions of Theorem \ref{thm_1459}. Then there exists $c_{\mu} > 0$
such that for any $\mu$-integrable, convex function $\vphi: \RR^n \rightarrow \RR \cup \{ + \infty \}$ with $\exp(-\vphi)$ integrable and $\vphi(0) = 0$, we have
\begin{equation}  \left( \int_{\RR^n} e^{-\vphi} \right)^{1/n} \left(\int_{\RR^n} \left[\vphi - \inf \vphi \right] d \mu  + 1 \right) \geq c_{\mu}.
\label{eq_1513}
\end{equation}
\label{lem_2116}
\end{lemma}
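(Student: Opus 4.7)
The plan is to combine a ``half-space mass'' estimate for $\mu$ (coming from hypotheses~(i)--(iii)) with the Brunn--Minkowski concavity of the volumes of the convex sublevel sets of $\varphi$. The normalization $\varphi(0)=0$ is what anchors these sublevel sets at the origin, where the half-space estimate will apply.

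First I would establish the half-space estimate. The function $\theta\mapsto q_\mu(\theta):=\int(x\cdot\theta)_+\,d\mu$ is continuous on $S^{n-1}$ (finite first moment), and strictly positive: $q_\mu(\theta)=0$ would force $\mu$ to sit in $\{x\cdot\theta\le 0\}$ and then, by the barycenter hypothesis, in the hyperplane $\theta^\perp$, contradicting~(ii). Set $c_0:=\min_\theta q_\mu(\theta)>0$. For any fixed $\delta_*<c_0$, a Fatou/lower-semicontinuity argument on $S^{n-1}$ then yields a positive number $\eta_*:=\inf_\theta\mu(\{x\cdot\theta>\delta_*\})$. The geometric consequence I need is: any convex set $A\ni 0$ not containing the Euclidean ball $B(0,\delta_*)$ satisfies $\mu(A^c)\ge\eta_*$; this follows by applying the supporting-hyperplane theorem to a point of $B(0,\delta_*)\setminus\overline{A}$, which produces a direction $\theta$ with $h_A(\theta)<\delta_*$ and hence $A^c\supset\{x\cdot\theta\ge\delta_*\}$.

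Next I would apply this to the sublevel sets $A_s:=\{\varphi\le s\}$. Write $m:=\inf\varphi\le 0$ and $I:=\int(\varphi-m)\,d\mu$. Markov's inequality gives $\mu(A_s^c)\le I/(s-m)$, so at the threshold $s_0:=m+2I/\eta_*+1$ one has $\mu(A_{s_0}^c)\le\eta_*/2<\eta_*$, which by the previous step forces $B(0,\delta_*)\subset A_{s_0}$ and thus $|A_{s_0}|\ge\omega_n\delta_*^n$ (with $\omega_n$ the volume of the Euclidean unit ball). The Brunn--Minkowski inequality applied to the inclusion $A_{(1-t)s_1+ts_2}\supset(1-t)A_{s_1}+t A_{s_2}$ (which follows from the convexity of $\varphi$) shows that $s\mapsto|A_s|^{1/n}$ is concave on $[m,\infty)$ and vanishes at $s=m$; the chord through $(m,0)$ and $(s_0,|A_{s_0}|^{1/n})$ then yields the linear lower bound $|A_s|^{1/n}\ge\frac{s-m}{s_0-m}\,\omega_n^{1/n}\delta_*$ throughout $[m,s_0]$. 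Inserting this into the layer-cake identity $\int e^{-\varphi}=\int_m^\infty|A_s|\,e^{-s}\,ds$, substituting $u=s-m$, and using $e^{-m}\ge 1$ reduces the estimate to an incomplete Gamma integral $\int_0^{s_0-m}u^n e^{-u}\,du$; since $s_0-m\ge 1$, this integral is bounded below by a strictly positive constant depending only on $n$. Tracking constants yields $\int e^{-\varphi}\ge C(n,\mu)/(2I+\eta_*)^n$, and taking $n$-th roots then multiplying by $(I+1)$ gives~(\ref{eq_1513}), since $(I+1)/(2I+\eta_*)\ge 1/\max(2,\eta_*)$ for all $I\ge 0$.

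The main obstacle is the exponential weight $e^{-s}$ in the layer-cake formula: using only the ``at-the-threshold'' estimate $|A_{s_0}|\ge\omega_n\delta_*^n$ at the single level $s_0\sim I/\eta_*$ would yield the useless factor $e^{-s_0}\sim e^{-I/\eta_*}$, devastating for large $I$. The Brunn--Minkowski concavity is precisely the mechanism that propagates the single-scale volume bound from $s_0$ down to small levels $s$, converting it into a polynomial control on $|A_s|$ over the entire interval $[m,s_0]$; the integral of this polynomial against $e^{-s}$ then produces the $(I+1)^{-n}$ scaling that the statement predicts.
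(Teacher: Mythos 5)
Your argument is correct and reaches the stated bound, but by a genuinely different route than the paper. The paper fixes the level set $K=\{\varphi\le 1\}$, invokes the Rogers--Shephard inequality to produce a direction $\theta_0$ along which $K-K$ is thin (width at most $R_\varphi\approx Vol_n(K)^{1/n}$), upgrades this to a global \emph{linear} lower bound $\varphi(x)\ge |x\cdot\theta_0|/R_\varphi+(\inf\varphi-1)$ via convexity, and then integrates against $\mu$ to relate $\int\varphi\,d\mu$ directly to $Vol_n(K)^{-1/n}$; the conclusion then drops out after one application of $\int e^{-\varphi}\ge e^{-1}Vol_n(K)$. You work in the opposite direction: from the barycenter/non-degeneracy hypotheses you extract a uniform half-space mass $\eta_*$, use Markov to pin down a single threshold level $s_0=m+2I/\eta_*+1$ whose sublevel set must contain a fixed ball, and then propagate that single-scale volume bound to all levels $s\in(m,s_0]$ via the Brunn--Minkowski concavity of $s\mapsto |\{\varphi\le s\}|^{1/n}$, finishing with the layer-cake integral. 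Both proofs are organized around the same tension (fast growth of $\varphi$ in one direction versus $\mu$ having mass in every direction), and both produce the $\big(\int(\varphi-\inf\varphi)\,d\mu\big)^{-n}$ scaling. The paper's route is shorter and goes through Rogers--Shephard as a black box; yours is longer but uses only Brunn--Minkowski and more transparently exhibits the mechanism that produces the polynomial rate.

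Two small points you should tighten. First, to apply your half-space lemma you need $0\in A_{s_0}$, i.e.\ $s_0\ge 0$; since $m$ can be very negative this is not free. It does hold, because taking $y_0\in\partial\varphi(0)$ gives $\varphi\ge y_0\cdot(\,\cdot\,)$ and hence $\int\varphi\,d\mu\ge 0$ by the barycenter hypothesis, so $I=\int\varphi\,d\mu-m\ge -m$; combined with $\eta_*\le 1$ this gives $s_0\ge 1>0$. This is worth a sentence. Second, ``$|A_s|^{1/n}$ vanishes at $s=m$'' need not be literally true, as the set where a convex function attains its infimum can have positive volume; but the chord estimate you use is the chord through $(m,0)$, which sits below the graph whether or not $|A_m|=0$, so the conclusion is unaffected.
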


\begin{proof} From the dominated convergence theorem, the function
$$ S^{n-1} \ni \theta \to \int_{\RR^n} |x \cdot \theta| d \mu(x) $$
is continuous.
Since the support of $\mu$ is not contained in a hyperplane, then this function is always positive.
 Hence its infimum on the sphere, denoted by $m_{\mu}$, is positive.
We will prove (\ref{eq_1513}) with
$$ c_{\mu} = \frac{\kappa_n^{1/n} m_{\mu}}{4 e^{1/n}}  $$
where $\kappa_n$ is the volume of the $n$-dimensional unit ball.
Let $\vphi$ be a convex function on $\RR^n$ with $\exp(-\vphi)$ integrable and $\vphi(0) = 0$.
Set
$$ K = \{ x \in \RR^n ; \vphi(x) \leq 1 \}. $$
Then $K$ is a convex set, containing the origin in its interior. Let us denote by $r_\varphi$ the radius of the largest Euclidean ball contained in the centrally-symmetric convex set $$ K-K = \left \{ x - y \, ; \, x,y \in K \right \}. $$
Since $\exp(-\vphi)$ is integrable, then $K$ has finite
volume. According to the Rogers-Shepherd inequality \cite{RS}, the set $K - K$
has volume at most $4^n Vol_n(K)$. In particular,
$K - K$ cannot contain an Euclidean ball of radius greater than
$$ R_{\vphi} := 4 \kappa_n^{-1/n} Vol_n(K)^{1/n}, $$
which means that  $r_\varphi \le R_\varphi$.
By definition of $r_\varphi$,  we may find a vector   $\theta_0 \in S^{n-1}$ such that
\begin{equation}  \sup_{x \in K-K} |x \cdot \theta_0| = \sup_{x \in K-K} x \cdot \theta_0 = r_{\vphi}\le R_\varphi. \label{eq_1706} \end{equation}
Since $K \subseteq K-K$, we conclude from (\ref{eq_1706}) that for any $x \in \RR^n$:
\begin{equation}  |x \cdot \theta_0| \ge R_{\vphi} \quad \Longrightarrow \quad
\vphi(x) \geq 1. \label{eq_1546} \end{equation}
By the convexity of $\vphi$, we have for any $x \in \RR^n$ with  $|x \cdot \theta_0| \geq R_{\vphi}$,
\begin{equation}  \vphi \left( \frac{R_{\vphi}}{|x \cdot \theta_0|} x \right) \leq \frac{R_{\vphi}}{|x \cdot \theta_0|} \vphi(x) + \left( 1 - \frac{R_{\vphi}}{|x \cdot \theta_0|} \right) \vphi(0)
= \frac{R_{\vphi}}{|x \cdot \theta_0|} \vphi(x).
\label{eq_1547} \end{equation}
Now (\ref{eq_1546}) and (\ref{eq_1547}) yield  that for any $x \in \RR^n$ with  $|x \cdot \theta_0| \geq R_{\vphi}$,
\begin{equation}  \vphi(x) \geq \frac{|x \cdot \theta_0|}{R_{\vphi}} + \left( \inf \vphi - 1 \right),
\label{eq_1550}
\end{equation}
since $\inf \vphi \leq 0$.
However, (\ref{eq_1550}) holds trivially when $|x \cdot \theta_0| < R_{\vphi}$, and therefore the bound (\ref{eq_1550}) is valid
for any $x \in \RR^n$. Integrating it, we find
\begin{equation}  \int_{\RR^n} \vphi d \mu \geq  \frac{1}{R_{\vphi}} \int_{\RR^n} |x \cdot \theta_0| d \mu(x) + (\inf \vphi - 1) \geq \frac{m_{\mu}}{R_{\vphi}} + (\inf \vphi - 1). \label{eq_1558}
\end{equation}
However,
\begin{equation}
\int_{\RR^n}  e^{-\vphi} \geq \int_{K} e^{-\vphi} \geq Vol_n(K) / e = \frac{\kappa_n}{4^n e}   R_{\vphi}^n. \label{eq_1644}
\end{equation}
From (\ref{eq_1558}) and (\ref{eq_1644}) we deduce (\ref{eq_1513}). \end{proof}

\begin{lemma} Let $\mu$ be a  probability measure on $\RR^n$ satisfying the requirements of Theorem \ref{thm_1459}.
Let $\vphi: \RR^n \rightarrow \RR \cup \{+\infty \}$ be a $\mu$-integrable convex function
with $\vphi(0) = 0$ and denote $\psi = \vphi^*$.
Then,
\begin{equation}
 \int_{\RR^n} \vphi \, d \mu \geq \frac{c_{\mu}}{2 \pi} \left( \int_{\RR^n} e^{-\psi} \right)^{1/n} - (n+1) \label{eq_1111}
\end{equation}
where $c_{\mu} > 0$ is the constant from Lemma \ref{lem_2116}. \label{lem_1119}
\end{lemma}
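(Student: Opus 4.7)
My plan is to bootstrap Lemma~\ref{lem_2116} into the stated inequality using the functional Blaschke--Santal\'o inequality, after translating $\psi$ so that the log-concave density $e^{-\psi}$ is centered at the origin.

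The degenerate case $\int e^{-\psi}=0$ is handled by Jensen's inequality applied to the probability measure $\mu$ (which has barycenter $0$): $\int\varphi\,d\mu\geq\varphi(0)=0\geq -(n+1)$. So assume $Z:=\int e^{-\psi}\in(0,+\infty)$, and let $y_0:=\tfrac{1}{Z}\int y\,e^{-\psi(y)}\,dy$ denote the barycenter of the log-concave probability density $\nu:=e^{-\psi}/Z$, which is well-defined by Lemma~\ref{lem_1439}. I would then introduce the translated dual pair
\[\hat\psi(y):=\psi(y+y_0),\qquad \hat\varphi(x):=\hat\psi^*(x)=\varphi(x)-x\cdot y_0,\]
so that $\hat\varphi(0)=0$, the density $e^{-\hat\psi}/Z$ has barycenter $0$, and $\hat\varphi$ is $\mu$-integrable (as $\varphi$ is and $\mu$ has finite first moments). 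The functional Blaschke--Santal\'o inequality applied to this centered pair gives $\int e^{-\hat\varphi}\cdot\int e^{-\hat\psi}\leq (2\pi)^n$; since $\int e^{-\hat\psi}=Z$ by translation invariance, one gets $\int e^{-\hat\varphi}\leq (2\pi)^n/Z$. Applying Lemma~\ref{lem_2116} to $\hat\varphi$ and simplifying with $\int\hat\varphi\,d\mu=\int\varphi\,d\mu$ (because $\bar\mu=0$) and $\inf\hat\varphi=-\varphi^*(y_0)=-\psi(y_0)$, I obtain
\[\int\varphi\,d\mu+\psi(y_0)+1\;\geq\;\frac{c_\mu}{(\int e^{-\hat\varphi})^{1/n}}\;\geq\;\frac{c_\mu}{2\pi}\,Z^{1/n}.\]

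The remaining step is the bound $\psi(y_0)\leq n$. Assuming $\varphi$ is lsc (otherwise replacing it by its biconjugate only strengthens the conclusion since $\varphi\geq\varphi^{**}$), $\varphi(0)=0$ forces $\inf\psi=-\varphi^{**}(0)=0$. Jensen's inequality gives $\psi(y_0)\leq\int\psi\,d\nu$, and I would establish the log-concave entropy estimate $\int\psi\,d\nu\leq\inf\psi+n=n$ by combining the convexity inequality $\psi(y)-\inf\psi\leq\nabla\psi(y)\cdot(y-y^*)$ (with $y^*$ a minimizer of $\psi$; its existence, or an approximating sequence if not attained, is handled by a limiting argument) with the integration-by-parts bound $\int\nabla\psi(y)\cdot(y-y^*)\,e^{-\psi(y)}\,dy\leq nZ$. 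This last bound is proved exactly as in Lemma~\ref{lem_1126}: the identity follows from the divergence theorem on compact convex sets $K$ exhausting the interior of $\{\psi<+\infty\}$, with boundary terms $\int_{\partial K}(y-y^*)\cdot\nu_y\,e^{-\psi}\,dS\geq 0$ because $y^*$ lies in the interior of $K$ (so this particular form of the inequality does not require essential continuity). The principal technical point is exactly this integration by parts and the associated approximation argument. With $\psi(y_0)\leq n$ in hand, the previous display yields $\int\varphi\,d\mu\geq\tfrac{c_\mu}{2\pi}Z^{1/n}-(n+1)$, as desired.
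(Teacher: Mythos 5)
Your proof is correct and follows the same overall strategy as the paper: handle the degenerate case $\int e^{-\psi}=0$ by Jensen, center the log-concave density $e^{-\psi}$ at the origin (you do this by subtracting a linear functional from $\vphi$, the paper by translating $\psi$ — these are the same move), invoke the functional Santal\'o inequality of Artstein--Klartag--Milman, and plug into Lemma~\ref{lem_2116}. The only place you genuinely deviate is the bound $\psi(\text{barycenter of }e^{-\psi})\le n$. The paper gets this in one line by citing Fradelizi's theorem~\cite{fradelizi} on sections of convex bodies through their centroid (in its functional form). You instead prove the slightly stronger integral bound $\int \psi\, d\nu \le \inf\psi + n$, from which the pointwise bound follows by Jensen, and you establish this via the convexity estimate $\psi(y)-\inf\psi\le \nabla\psi(y)\cdot(y-y^*)$ together with an integration-by-parts inequality. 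Your observation that this integration-by-parts inequality, $\int_{\R^n}\nabla\psi(y)\cdot(y-y^*)\,e^{-\psi}\,dy\le n\int e^{-\psi}$ for $y^*$ interior to $\{\psi<+\infty\}$, holds \emph{without} essential continuity is exactly what the first half of the proof of Lemma~\ref{lem_1126} establishes: the divergence-theorem boundary term $\int_{\partial K}e^{-\psi}(y-y^*)\cdot\nu_y\,dS$ has a favorable sign by convexity of $K$, and essential continuity is invoked only afterward to kill the term $\int\nabla\psi\cdot y^*\,d\mu_\psi$, which you never need. So your version is more self-contained (it replaces an external citation by a short argument that reuses machinery already built for Lemma~\ref{lem_1126}), at the cost of a somewhat heavier write-up including the approximation argument for the location of the minimizer. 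One small tidiness remark: the identity $\hat\vphi=\hat\psi^\ast=\vphi-\langle\cdot,y_0\rangle$ you use from the start already requires $\vphi=\vphi^{**}$, so the reduction to lower semi-continuous $\vphi$ should be stated at the outset rather than mid-argument; this is harmless but worth cleaning up. Apart from that, the argument is sound.
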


\begin{proof} From the remarks
at the end of Section~\ref{sec2},
the function $\vphi$
is finite near the origin, and $\exp(-\psi)$ is an integrable, log-concave function.
Additionally, since $\vphi$ is finite near the origin, there exists $x_0 \in \partial \vphi(0)$.
Then $\vphi(y) \geq x_0 \cdot y$ for all $y \in \RR^n$, and
$$ \int_{\RR^n} \vphi\,  d \mu \geq \int_{\RR^n} (x_0 \cdot y) d \mu(y) = 0. $$
In proving (\ref{eq_1111}), we may thus restrict attention to the case where $\int \exp(-\psi) > 0$.
Furthermore, adding a linear function to $\vphi$ corresponds to translating $\psi$, and does not
change neither the left-hand side of (\ref{eq_1111}) nor the right-hand side. We may thus
translate translate $\psi$ so that
\begin{equation} \int_{\RR^n} x_i e^{-\psi(x)} \,dx =0 \quad \quad \quad \quad (i=1,\ldots,n),
\label{eq_1724}
\end{equation}
i.e. the barycenter of $e^{-\psi(x)} \,dx$ lies at the origin.
Since $\vphi(0) = 0$ then $\inf \psi = 0$.
 An inequality
proven in Fradelizi \cite{fradelizi} states that, thanks to (\ref{eq_1724}),
$$  \psi(0) \leq \inf_{x \in \RR^n} \psi(x) + n = n. $$
Consequently,
$$ \inf_{y \in \RR^n} \vphi(y) = - \psi(0) \geq -n. $$
Since $\vphi(0) = 0$, then we may apply Lemma
\ref{lem_2116}, and conclude that
\begin{equation}
 \left( \int_{\RR^n} e^{-\vphi} \right)^{1/n} \left(\int_{\RR^n} \vphi d \mu + n + 1 \right) \geq c_{\mu}. \label{eq_2118}
 \end{equation}
Since the log-concave function $e^{-\psi}$ has barycenter at the origin, the functional Santal\'o inequality from Artstein, Klartag and Milman \cite{AKM} asserts that
\begin{equation} \int_{\RR^n} e^{-\psi} \int_{\RR^n} e^{-\vphi} \leq (2 \pi)^n.
\label{eq_2125}
\end{equation}
Now (\ref{eq_1111}) follows from (\ref{eq_2118}) and (\ref{eq_2125}), and the lemma is proven. \end{proof}

\begin{lemma} Let $\mu$ be a finite Borel measure in $\RR^n$ and let $K$ be the interior
of $conv(Supp(\mu))$. If $x_0 \in K$, then there exists $C_{\mu, x_0} > 0$
with the following property: For any non-negative, $\mu$-integrable,
convex function $\vphi: \RR^n \rightarrow \RR \cup \{ + \infty \}$,
$$ \vphi(x_0) \leq C_{\mu, x_0} \int_{\RR^n} \vphi d \mu. $$ \label{lem_2211}
\end{lemma}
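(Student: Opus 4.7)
The plan is to realize $\vphi(x_0)$ as a convex combination of values of $\vphi$ at points near $Supp(\mu)$, in such a way that we can average the resulting inequality against $\mu$. The first step is to produce, inside $Supp(\mu)$, a finite family of points whose convex hull contains $x_0$ in its \emph{interior}. Since $x_0 \in K = int(conv(Supp(\mu)))$, pick $\delta > 0$ with $\overline{B}(x_0,\delta) \subset conv(Supp(\mu))$ and choose vectors $v_0,\ldots,v_n \in \RR^n$ of norm $\delta$ satisfying $\sum_i v_i = 0$ and with $0$ in the interior of $conv(v_0,\ldots,v_n)$ (for instance, scaled vertices of a regular simplex centered at the origin). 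Each $x_0 + v_i$ lies in $conv(Supp(\mu))$, so Carath\'eodory's theorem furnishes points $y_{ij} \in Supp(\mu)$ and coefficients $\alpha_{ij} \geq 0$ with $\sum_j \alpha_{ij}=1$ and $x_0 + v_i = \sum_{j=0}^n \alpha_{ij}\, y_{ij}$. Averaging over $i$ and using $\sum_i v_i = 0$ expresses $x_0$ as a strict convex combination of the points $\{y_{ij}\}$, which we relabel $w_1,\ldots,w_N$ (with $N \leq (n+1)^2$). Moreover, $conv(w_1,\ldots,w_N)$ contains each $x_0+v_i$, hence the full simplex $x_0 + conv(v_0,\ldots,v_n)$, which is a neighborhood of $x_0$; so indeed $x_0 \in int(conv(w_1,\ldots,w_N))$.

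Next, fix $\delta' > 0$ with $\overline{B}(x_0,\delta') \subset conv(w_1,\ldots,w_N)$ and $\eps \in (0,\delta')$ small enough that for every choice of $z_l \in B(w_l,\eps)$, $l=1,\ldots,N$, one still has $x_0 \in conv(z_1,\ldots,z_N)$ (a straightforward separating-hyperplane / Hausdorff-distance estimate). Set $m_l := \mu(B(w_l,\eps))$, which is positive since $w_l \in Supp(\mu)$. For each such tuple $(z_l)$, choose coefficients $\alpha_l(z) \in [0,1]$ with $\sum_l \alpha_l(z)=1$ and $\sum_l \alpha_l(z)\,z_l = x_0$, and use the convexity and non-negativity of $\vphi$ to write
$$ \vphi(x_0) \leq \sum_l \alpha_l(z)\, \vphi(z_l) \leq \sum_l \vphi(z_l), $$
a pointwise inequality that remains valid when some $\vphi(z_l) = +\infty$. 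Integrating against the product measure $d\mu(z_1)\cdots d\mu(z_N)$ on $\prod_l B(w_l,\eps)$ and applying Fubini, together with $\vphi \geq 0$, gives
$$ \vphi(x_0) \prod_l m_l \leq \sum_l \Big( \prod_{l'\neq l} m_{l'} \Big) \int_{B(w_l,\eps)} \vphi\, d\mu \leq \Big( \sum_l \tfrac{1}{m_l} \Big) \prod_l m_l \cdot \int_{\RR^n} \vphi\, d\mu. $$
Dividing by $\prod_l m_l$ gives the claim with $C_{\mu, x_0} = \sum_l 1/m_l$.

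The main obstacle is the first step: ensuring that the finite family of points extracted from $Supp(\mu)$ places $x_0$ in the \emph{interior} of their convex hull, not merely in it. A naive Carath\'eodory decomposition of $x_0$ typically yields $n+1$ points with $x_0$ on a face of their convex hull, and then arbitrarily small perturbations of the vertices could move $x_0$ outside, destroying the stability we need in order to average against $\mu$. Running Carath\'eodory on the $n+1$ displaced points $x_0 + v_i$ and then averaging with $\sum v_i = 0$ fixes this cleanly; once Step 1 is secured, the stability argument and the Fubini-averaging are essentially mechanical.
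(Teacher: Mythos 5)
Your proof is correct, but it takes a genuinely different and more elaborate route than the paper's. The paper's argument is much shorter: since $x_0$ is interior to $conv(Supp(\mu))$, the quantity $m_{\mu,x_0} := \inf_{\theta \in S^{n-1}} \mu\big(\{x : (x-x_0)\cdot\theta > 0\}\big)$ is strictly positive (the infimum is attained, by lower semi-continuity of $\theta \mapsto \mu(\{(x-x_0)\cdot\theta > 0\})$ via Fatou, together with compactness of the sphere). Then one picks any $y_0 \in \partial\vphi(x_0)$, so that $\vphi(x) \geq \vphi(x_0) + y_0\cdot(x - x_0)$, and integrates only over the half-space $\{(x-x_0)\cdot y_0 \geq 0\}$, where the linear correction is non-negative and can be dropped; this directly yields $\int \vphi\, d\mu \geq m_{\mu,x_0}\,\vphi(x_0)$, and the constant $C_{\mu,x_0} = 1/m_{\mu,x_0}$ is universal over all admissible $\vphi$. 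Your approach instead assembles a finite family $w_1,\ldots,w_N \in Supp(\mu)$ with $x_0$ in the interior of their convex hull (via Carath\'eodory applied to the displaced points $x_0 + v_i$ and averaging with $\sum_i v_i = 0$), then uses a stability estimate for small balls around the $w_l$ and a Fubini averaging against the product measure. Both proofs are valid. What the paper's subgradient route buys is brevity and the avoidance of any point-selection or perturbation analysis: the hard part of your argument (getting $x_0$ \emph{strictly inside} the hull of finitely many support points, then making that robust under $\eps$-perturbations) is precisely what the supporting-affine-function trick makes unnecessary. What your route buys, arguably, is a more hands-on, combinatorial picture of why the lemma holds. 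One minor remark: your argument implicitly forces $\vphi(x_0) < +\infty$ (the right-hand side of your final inequality is finite, and the $m_l$ are positive), so the conclusion is unambiguous even without separately proving finiteness at $x_0$; the paper handles this by noting $\vphi$ is finite near $x_0$ directly, which is also what guarantees $\partial\vphi(x_0) \neq \emptyset$.
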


\begin{proof} Since $x_0$ is in the interior of $conv(Supp(\mu))$, then
for any $\theta \in S^{n-1}$,
\begin{equation}  \mu \left( \left \{ x \in \RR^n \, ; \, (x - x_0) \cdot \theta > 0 \right\} \right)  > 0.
\label{eq_2152} \end{equation}
By Fatou's lemma, the left-hand side of (\ref{eq_2152}) is a lower semi-continuous function of $\theta \in S^{n-1}$.
Hence the infimum of the left-hand side of (\ref{eq_2152}) over $\theta \in S^{n-1}$, denoted by $m_{\mu, x_0}$,
is attained and is therefore positive. Let $\vphi$ be a non-negative, $\mu$-integrable, convex
function. The function $\vphi$ is necessarily finite near $x_0$, and hence  there exists $y_0 \in \partial \vphi(x_0)$.
Using that $\vphi(x) \geq \vphi(x_0) + y_0 \cdot (x - x_0)$ for all $x$, we find
$$ \int_{\RR^n} \vphi \, d \mu  \geq \int_{ \{x ; (x - x_0) \cdot y_0 \geq 0 \}} \vphi(x) d \mu(x) \geq
\vphi(x_0) \cdot \mu \Big( \Big\{ x ;  (x - x_0) \cdot  y_0 \geq 0 \Big\} \Big) \geq m_{\mu, x_0} \cdot \vphi(x_0). $$
The lemma follows with $C_{\mu, x_0} = 1 / m_{\mu, x_0}$. \end{proof}

\begin{lemma} Let $\mu$ be a measure on $\RR^n$ satisfying the requirements of Theorem \ref{thm_1459}.
Assume that with any $\ell \geq 1$ we are given a $\mu$-integrable, non-negative convex function $\vphi_\ell: \RR^n \rightarrow [0,+\infty]$
with $\vphi_{\ell}(0) = 0$ and such that
\begin{equation} \sup_\ell \int_{\RR^n} \vphi_\ell\,  d \mu < +\infty.
\label{eq_1520}
\end{equation}
Then there exists a subsequence $\{ \vphi_{\ell_j} \}_{j=1,2,\ldots}$ and a non-negative, $\mu$-integrable, convex function $\vphi: \RR^n \rightarrow \RR \cup \{ + \infty \}$ such that,
denoting  $\psi_{\ell} = \vphi_{\ell}^*$ and $\psi = \vphi^*$,
\begin{equation}  \int_{\RR^n} \vphi d \mu \leq \liminf_{j \rightarrow \infty} \int_{\RR^n} \vphi_{\ell_j} d \mu \quad \quad \text{and} \quad \quad \int_{\RR^n} e^{-\psi} \geq \limsup_{j \rightarrow \infty} \int_{\RR^n} e^{-\psi_{\ell_j}}. \label{eq_1531} \end{equation} \label{lem_1600}
\end{lemma}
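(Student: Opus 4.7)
The plan is to extract a locally uniform limit on the interior of the convex hull of the support of $\mu$, extend it to a lower-semicontinuous convex function on $\RR^n$, and pass to the limit in the two quantities using the order-reversing property of the Legendre transform together with Fatou/dominated convergence.

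\emph{Compactness step.} Let $K$ denote the interior of $conv(Supp(\mu))$. By assumption (ii) on $\mu$, $K$ is a non-empty open convex set, and the barycenter $0$ of $\mu$ lies in $K$ (end of Section~\ref{sec2}). Applying Lemma~\ref{lem_2211} at each $x\in K$ gives $\vphi_\ell(x)\le C_{\mu,x}\int\vphi_\ell\,d\mu$, which is uniformly bounded in $\ell$ by~(\ref{eq_1520}). Thus $\{\vphi_\ell\}$ is pointwise bounded on $K$, and by convexity this upgrades to a local uniform bound, hence to a local equi-Lipschitz bound, on $K$. A standard Arzel\`a-Ascoli-type selection argument then produces a subsequence $\vphi_{\ell_j}$ converging locally uniformly on $K$ to a non-negative convex function $\tilde\vphi\colon K\to[0,+\infty)$. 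Define $\vphi\colon\RR^n\to[0,+\infty]$ as the LSC convex extension: $\vphi=\tilde\vphi$ on $K$, $\vphi(x)=\liminf_{y\to x,\, y\in K}\tilde\vphi(y)$ on $\partial K$, and $\vphi=+\infty$ off $\overline K$; this $\vphi$ is convex and lower-semicontinuous.

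\emph{First inequality.} I would show that $\vphi(x)\le\liminf_j\vphi_{\ell_j}(x)$ for every $x\in\overline K$. On $K$ it is pointwise convergence. For $x\in\partial K$, fix $y_0\in K$ and set $z_t:=tx+(1-t)y_0\in K$ for $t\in(0,1)$; convexity of $\vphi_{\ell_j}$ rewrites as $\vphi_{\ell_j}(x)\ge\tfrac1t[\vphi_{\ell_j}(z_t)-(1-t)\vphi_{\ell_j}(y_0)]$. Passing $j\to\infty$ (using pointwise convergence at $z_t,y_0\in K$) and then $t\to 1^-$ (so $z_t\to x$ from inside $K$) yields $\liminf_j\vphi_{\ell_j}(x)\ge\liminf_{y\to x,\, y\in K}\tilde\vphi(y)=\vphi(x)$. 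Since $Supp(\mu)\subseteq\overline K$, Fatou's lemma then gives the first half of~(\ref{eq_1531}), and in particular $\vphi$ is $\mu$-integrable.

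\emph{Second inequality.} For each $z\in\RR^n$ and each $x\in K$, $\psi_{\ell_j}(z)\ge x\cdot z-\vphi_{\ell_j}(x)\to x\cdot z-\vphi(x)$, so $\liminf_j\psi_{\ell_j}(z)\ge\sup_{x\in K}[x\cdot z-\vphi(x)]$. The liminf-from-inside definition of $\vphi$ on $\partial K$ together with $\vphi=+\infty$ off $\overline K$ forces $\sup_{x\in K}[x\cdot z-\vphi(x)]=\psi(z)$, whence $\psi(z)\le\liminf_j\psi_{\ell_j}(z)$ and $e^{-\psi(z)}\ge\limsup_j e^{-\psi_{\ell_j}(z)}$ pointwise. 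To upgrade this to integrals I need an integrable majorant: choose $r>0$ with $B_r(0)\subset K$, so the compactness step supplies $M_r$ with $\vphi_{\ell_j}\le M_r$ on $B_r(0)$ uniformly in $j$, and testing the Legendre transform at $x=ry/|y|$ gives $\psi_{\ell_j}(y)\ge r|y|-M_r$, hence $e^{-\psi_{\ell_j}(y)}\le e^{M_r}e^{-r|y|}$. The reverse Fatou lemma then yields $\limsup_j\int e^{-\psi_{\ell_j}}\le\int\limsup_j e^{-\psi_{\ell_j}}\le\int e^{-\psi}$.

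The main obstacle is the boundary behavior on $\partial K$: $\mu$ may charge $\partial K$, yet the locally uniform convergence of $\vphi_{\ell_j}$ takes place only on the open set $K$. Reconciling this requires both the convex-combination argument of the second step and the particular LSC convex extension chosen in the first. The exponential majorant in the third step is the other delicate point, and it is precisely here that assumption (iii) on the barycenter of $\mu$ is used, via $0\in K$.
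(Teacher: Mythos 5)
Your argument is correct, and the first two steps (compactness via Lemma~\ref{lem_2211} plus convexity, extension to $\overline K$ by lower semicontinuous closure, Fatou for the first inequality) coincide with the paper's: the radial limit $\lim_{\lambda\to 1^-}\vphi(\lambda x)$ that the paper uses and your $\liminf_{y\to x,\,y\in K}\tilde\vphi(y)$ are the same object by Rockafellar's theorem on the closure of a convex function, and your local-equi-Lipschitz selection is exactly what the paper gets from Rockafellar's Theorem~10.9. Where you genuinely diverge is the second inequality. The paper fixes a dense sequence $x_1,x_2,\dots$ in $K$, forms the finite maxima $\tilde\psi_j(y)=\max_{i\le j}[x_i\cdot y-\vphi(x_i)]\nearrow\psi(y)$, chooses $j_0$ so that $\int e^{-\tilde\psi_{j_0}}$ is within $\eps$ of $\int e^{-\psi}$, and then uses pointwise convergence at the finitely many $x_1,\dots,x_{j_0}$ to get $\psi_{\ell_j}\ge\tilde\psi_{j_0}-\eps$ for large $j$; monotone convergence supplies the integrability. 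You instead prove the pointwise lower semicontinuity $\psi\le\liminf_j\psi_{\ell_j}$ directly (using that $\psi=\sup_{x\in K}[x\cdot z-\vphi(x)]$, which holds because $\vphi$ was closed from inside $K$), and then extract the uniform exponential envelope $e^{-\psi_{\ell_j}(y)}\le e^{M_r}e^{-r|y|}$ from the equi-bound on a small ball $B_r(0)\subset K$, so that a reverse-Fatou/dominated-convergence argument finishes. Both routes hinge on the same two facts -- pointwise convergence of $\vphi_{\ell_j}$ on $K$ and a quantitative "$0$ is an interior point" estimate giving linear growth of the $\psi_{\ell_j}$ -- but your dominated-convergence version is more streamlined, trading the paper's $\eps$-bookkeeping with finite maxima for the construction of an explicit integrable majorant. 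Do note for completeness that the bound $e^{-\psi_{\ell_j}(y)}\le e^{M_r}e^{-r|y|}$ also covers $y=0$ since $M_r\ge\vphi_{\ell_j}(0)=0$.
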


\begin{proof} Denote by $K$ the interior of $conv(Supp(\mu))$, which is an open convex set, containing $0$.
 From (\ref{eq_1520}) and Lemma \ref{lem_2211}  we have that for any $x \in K$,
$$ \sup_\ell \vphi_\ell(x) < +\infty. $$
According to Rockafellar \cite[Theorem 10.9]{rockafellar},
there exists a subsequence $\{ \vphi_{\ell_j} \}_{j=1,2,\ldots}$ that converges pointwise in $K$ to a convex function $\vphi: K \rightarrow \RR$.
The convex function $\vphi$ is finite and thus continuous on the open set $K$. Additionally, $\vphi$  is non-negative in $K$ and achieves its minimum at the origin, where it vanishes. We extend the definition of $\vphi$ by setting
$\vphi(x) = +\infty$ for $x \not \in \overline{K}$.
We still need to define $\vphi(x)$ for points $x \in \partial K$. We will set for such $x \in \partial K$,
$$ \vphi(x) := \lim_{\lambda \rightarrow 1^-} \vphi(\lambda x). $$
This limit always exists in $[0, +\infty]$, since the function $\lambda \to \vphi(\lambda x)$ is non-decreasing for $\lambda \in (0,1)$.
Moreover, we have that $\vphi(\lambda x) \nearrow \vphi(x)$ as $\lambda \rightarrow 1^-$ for any $x \in \overline{K}$.
The resulting function $\vphi: \RR^n \rightarrow \RR \cup \{ + \infty \}$ is therefore convex and
non-negative with $\vphi(0) = 0$. We need to show that $\vphi$ is $\mu$-integrable and satisfies (\ref{eq_1531}). To that end, pick $0 < \lambda <1$.
For any $x \in \overline{K}$ the point $\lambda x$ belongs to $K$, and by the pointwise convergence in $K$,
\begin{equation}  \vphi(\lambda x) = \lim_{j \rightarrow \infty} \vphi_{\ell_j}(\lambda x) \quad \quad \quad \quad (x \in \overline{K}).
\label{eq_1414} \end{equation}
Since $Supp(\mu) \subseteq \overline{K}$, then from (\ref{eq_1414}) and Fatou's lemma, for any $0 < \lambda < 1$,
\begin{equation}  \int_{\RR^n} \vphi(\lambda x) d \mu(x) \leq \liminf_{j \rightarrow \infty} \int_{\RR^n} \vphi_{\ell_j}(\lambda x) d \mu(x) \leq \liminf_{j \rightarrow \infty} \int_{\RR^n} \vphi_{\ell_j}(x) d \mu(x) < +\infty \label{eq_1329} \end{equation}
where we used the fact that $\vphi_{\ell_j}(\lambda x) \leq \lambda \vphi_{\ell_j}(x) \leq \vphi_{\ell_j}(x)$, by convexity.
Recall that we have $\vphi(\lambda x) \nearrow \vphi(x)$ as $\lambda \rightarrow 1^-$ for any $x \in \overline{K}$.
From the monotone convergence theorem and (\ref{eq_1329}),
\begin{equation}  \int_{\RR^n} \vphi(x) d \mu(x) = \lim_{\lambda \rightarrow 1^-} \int_{\RR^n} \vphi(\lambda x) d \mu(x) \leq \liminf_{j \rightarrow \infty} \int_{\RR^n} \vphi_{\ell_j}(x) d \mu(x)
< +\infty.
\label{eq_1421} \end{equation}
This completes the proof of the first part of (\ref{eq_1531}). It still remains to prove the second part of (\ref{eq_1531}).
The function $\vphi$ is $\mu$-integrable, hence finite near the origin. Therefore $\exp(-\psi)$ is integrable, for $\psi = \vphi^*$.  Let $x_1,x_2,\ldots$ be a dense sequence in $K$. Then for any $y \in \RR^n$,
$$ \psi(y) = \sup_{x \in \RR^n} \left[ x \cdot y - \vphi(x) \right] = \sup_{x \in K} \left[ x \cdot y - \vphi(x) \right]
= \sup_{i\ge 1} \left[ x_i \cdot y - \vphi(x_i) \right] $$
by the continuity of $\vphi$ in $K$. For $j \geq 1$, set $\displaystyle \tilde{\psi}_j(x) = \max_{1\le i\le j} \left[ x_i \cdot y - \vphi(x_i) \right]$.
Now, for a sufficiently large $j$, the set $conv(x_1,\ldots,x_j)$ contains the origin in its interior and hence $\exp(-\tilde{\psi}_j)$ is integrable.
Since $\tilde{\psi}_j \nearrow \psi$ then from the monotone convergence theorem,
$$ \int_{\RR^n} e^{-\psi} = \lim_{j \rightarrow \infty} \int_{\RR^n} e^{-\tilde{\psi}_j}. $$
Fix $\eps > 0$. Then there exists $j_0$ such that $\int \exp(-\tilde{\psi}_{j_0})$ deviates from $\int \exp(-\psi)$ by at most $\eps$.
Abbreviate $\tilde{\psi} = \tilde{\psi}_{j_0}$. Since $\vphi_{\ell_j} \rightarrow \vphi$ pointwise on the set $\{x_1,\ldots,x_{j_0} \}$, then for sufficiently large $j$,
\begin{equation}  \psi_{\ell_j}(x) \geq \tilde{\psi}(x) - \eps \quad \quad \quad \quad \text{for all} \ x \in \RR^n. \label{eq_1608} \end{equation}
From the definition of $j_0$ and from (\ref{eq_1608}),
$$ \int_{\RR^n} e^{-\psi} \geq \int_{\RR^n} e^{-\tilde{\psi}} - \eps \geq  - \eps + e^{-\eps} \limsup_{j\rightarrow \infty} \int_{\RR^n} e^{-\psi_{\ell_j}}.
$$
Since $\eps > 0$ was arbitrary, then the second part of (\ref{eq_1531}) follows.
\end{proof}

We now have all  the ingredients for the proof of the Proposition.

\begin{proof}[Proof of Proposition~\ref{prop_1756}] Set $\tilde{c}_{\mu} = \cI_\mu(\tilde f)$ for $\tilde f(x) = |x|$. Then $\tilde{c}_{\mu}$
is some finite real number. Let $f_1,f_2,\ldots$ be a maximizing sequence of $\mu$-integrable functions, i.e.,
$$ \cI_{\mu}(f_\ell) \stackrel{\ell \rightarrow \infty} \longrightarrow \sup_{f} \cI_{\mu}(f) \ge \tilde c_\mu.$$
Let $M_\ell > 0$ be a sufficiently large number so  that $\int |f_\ell| 1_{ \{ f_\ell \leq -M_\ell \}}\, d \mu \leq 1/\ell$.
Denote $g_\ell = \max \{ f_\ell, -M_\ell \}\ge f_\ell$.
We have that $g_\ell$ is $\mu$-integrable with
\begin{equation} 0\le \int (g_\ell-f_\ell)\, d\mu = \int_{ \left \{ f_\ell\le -M_\ell \right \}} (-M_\ell-f_\ell)\, d\mu \le  -\int_{ \left \{ f_\ell\le -M_\ell \right \}} f_\ell \, d\mu \le \frac1\ell. \label{eq_458} \end{equation}
Since $\exp(-g_\ell^\ast)\ge \exp(-f_\ell^\ast)$ pointwise then from (\ref{eq_458}),
$$ \cI_{\mu}(g_\ell) \geq \cI_{\mu}(f_\ell) - \frac{1}{\ell} \stackrel{\ell \rightarrow \infty} \longrightarrow \sup_{f} \cI_{\mu}(f). $$
Furthermore, the function $\vphi_\ell = (g_\ell^*)^*: \RR^n \rightarrow \RR \cup \{ + \infty \}$ is convex, lower semi-continuous,
 pointwise smaller
than $g_\ell$, and it is always at least $-M_\ell$. In particular $\vphi_\ell$ is $\mu$-integrable.
Note also that $\vphi_\ell^* = g_\ell^*$, hence
$$ \cI_{\mu}(\vphi_\ell) \geq \cI_{\mu}(g_\ell) \stackrel{\ell \rightarrow \infty} \longrightarrow \sup_{f } \cI_{\mu}(f) \geq \tilde{c}_{\mu}. $$
Adding an affine function to $\vphi_{\ell}$ does not change $\cI_\mu(\vphi_{\ell})$.
We may therefore add an affine function and assume that $\vphi_\ell(0) = \inf \vphi_\ell = 0$ for all $\ell$. We arrived at a sequence $(\varphi_\ell)$ of nonnegative, $\mu$-integrable, convex functions such that  $\cI_{\mu}(\varphi_\ell) \to  \sup_{f} \cI_{\mu}(f) $.
 Furthermore, we may remove finitely many elements from the sequence $\{ \vphi_{\ell} \}$ and assume that for all $\ell$,
\begin{equation}
\cI_{\mu}(\vphi_\ell) \geq \tilde{c}_{\mu} - 1. \label{eq_1148}
\end{equation}
Lemma \ref{lem_1119} implies that for any $\ell$,
\begin{equation}
\log \int_{\RR^n} e^{-\vphi_\ell^*} - \cI_{\mu}(\vphi_\ell) \geq \frac{c_{\mu}}{2 \pi} \left( \int_{\RR^n} e^{-\vphi_\ell^*} \right)^{1/n} - (n+1).
\label{eq_1136}
\end{equation}
Combining (\ref{eq_1148}) and (\ref{eq_1136}) with the fact that $\log(t)=o(t^{1/n})$ when $t\to +\infty$, we conclude that
\begin{equation}  \sup_\ell  \int_{\RR^n} e^{-\vphi_\ell^*} < +\infty. \label{eq_1346}
\end{equation}
Consequently, $\sup_{f} \cI_{\mu}(f) \in \RR$. Moreover, from (\ref{eq_1148}) and (\ref{eq_1346}) we have that
\begin{equation} \sup_\ell \int_{\RR^n} \vphi_\ell\,  d \mu < +\infty.
\label{eq_1614} \end{equation}
We may apply Lemma \ref{lem_1600} based on (\ref{eq_1614}), and conclude that there exists a subsequence $\{ \vphi_{\ell_j} \}_{j=1,2,\ldots}$ and a non-negative, $\mu$-integrable  convex function $\vphi: \RR^n \rightarrow \RR \cup \{ + \infty \}$ such that
$$ \cI_{\mu}(\vphi) \geq \limsup_{j \rightarrow \infty} \cI_{\mu} \left(\vphi_{\ell_j} \right) = \sup_{f} \cI_{\mu}(f) \in \RR $$
where the supremum runs over all $\mu$-integrable functions $f: \RR^n \rightarrow \RR \cup \{ + \infty \}$. So $\cI_{\mu}(\vphi) =  \sup_{f} \cI_{\mu}(f)$. Moreover, since $\int \vphi d \mu
\in \RR$ and $\cI_{\mu}(\vphi) \in \RR$, we have that $\int e^{-\psi} \in (0, \infty)$. Adding a constant to $\vphi$ does not
change $\cI_{\mu}(\vphi)$. Therefore we may normalize $\vphi$ by adding a constant and arrange that $\int e^{-\psi} = 1$, as announced.

\end{proof}

\section{Problems of a similar nature}
\label{sec5}

Theorem \ref{thm_1459} is analogous to several results
and problems in convex geometry.
The closest problem is certainly the {\it logarithmic Minkowski problem} of
B\"or\"oczky, Lutwak, Yang and Zhang \cite{BLYZ}. It is concerned with associating to a Borel measure on a sphere, a convex body having this measure as its \emph{cone measure} (see below). In some sense, the moment measure of a convex function is the functional analogue of the cone measure of a convex body. More precisely, given a convex body, we can reproduce its cone measure from a suitable moment measure. Indeed, let $K \subset \RR^n$ be a convex body containing the origin. The Minkowski functional of $K$ is
$$ \| x \|_K = \inf \left \{ \lambda > 0 ; x \in \lambda K \right \} \quad \quad \quad \quad (x \in \RR^n). $$
Suppose that $f: [0, \infty) \rightarrow \RR \cup \{+ \infty \}$ is convex, increasing and non-constant. Set
$$
 \psi(x) = f(\| x \|_K) \quad \quad \quad \quad (x \in \RR^n). $$
Then $\psi$ is a convex function on $\RR^n$ with $0 < \int \exp(-\psi) < +\infty$, and almost everywhere in $\RR^n$,
$$ \nabla \psi(x) = f^{\prime}(\| x \|_K) \nabla \| x \|_K. $$
Integrating in polar coordinates, one may verify that the moment measure $\mu$ of $\psi$ takes
the  following form:
For any Borel subsets $A \subseteq \partial K^{\circ}$ and $B \subseteq [0, \infty)$,
$$ \mu(A \times B) = \nu_1(A) \nu_2(B). $$
The measure $\nu_2$ is not very important and it depends solely on the choice of $f$:
it is just the push-forward of the measure
on $[0, \infty)$ with density
$$ n t^{n-1} e^{-f(t)} $$
under the map $t \to f^{\prime}(t)$.
The geometry of the construction is in the measure  $\nu_1$, which  is a measure on $\partial K^{\circ}$ that does not depend on $f$. In fact,
$\nu_1$ is the push-forward of the Lebesgue measure on $K$ via the \emph{un-normalized} Gauss map
\begin{eqnarray*}
K & \to &\partial K^{\circ}\\
x &\to &\nabla \| x \|_K .
\end{eqnarray*}
For $0 \neq x \in \RR^n$ denote $\cR(x) = x / |x|$. Then the measure
$$ \cR_*(\nu_1) $$
is referred to as the {\it cone volume measure of $K$} in \cite{BLYZ},
where $\cR_*(\nu_1)$ is the push-forward of $\nu_1$ via $\cR$. This shows that the cone measure of a convex body can be recovered from the moment measure of a particular  convex function. This suggests that Theorem~\ref{thm_1459} gives the solution to a functional extension of the logarithmic Minkwoski problem, although, unfortunately, we do not see a quick way to recover the geometric form from it.

\medskip The original Minkowski problem from 1897 is related to the surface area measure
and not to the volume measure. The expression
\begin{equation}  \int_{\RR^n} |\nabla \psi| e^{-\psi} \label{eq_1213}
\end{equation}
is sometimes viewed as the analog, for a log concave function $e^{-\psi}$,  of the concept of a surface area of a convex body.
According to Lemma \ref{lem_1439}, the expression in (\ref{eq_1213}) is finite
whenever $\int \exp(-\psi) < +\infty$. We may therefore push-forward
the measure $|\nabla \psi(x)| \exp(-\psi(x)) \,dx$ under the map $x \to \nabla \psi(x)$.
The resulting measure, denoted by $\nu$, is a simple variant of the moment measure $\mu$ of $\psi$.
Namely,
$$ \frac{d \nu}{d \mu}(x) = |x| \quad \quad \quad \quad (x \in \RR^n). $$
We can therefore apply Theorem \ref{thm_1459} and understand exactly which measures $\nu$
arise this way, from an essentially-continuous convex function $\psi$, and we may also recover $\psi$ from $\nu$, up to translation.
Note that the latter problem is not {\it linearly invariant}. When dealing with the moment measure, on the other hand,
we require
nothing more than the structure of a finite-dimensional linear space.

\medskip There are many other variants of Theorem \ref{thm_1459} that could be interesting. For instance,
the use of the exponential function is convenient, but certainly not crucial. For various functions
$s_1, s_2$ on the real line, one may consider the measure $$ s_1(\psi(x)) \,dx
$$
on $\RR^n$, and push it forward using the map $x \to s_2(\psi(x)) \nabla \psi(x)$.  Perhaps
one has to replace the use of (\ref{eq_2125}) with the functional versions
of Santal\'o's inequality from Fradelizi and Meyer \cite{FM}. We do not investigate these potential
generalizations of Theorem \ref{thm_1459} in this paper.

{
}

\end{document}